\def\1{\mathbbm{1}}
\sloppy \pagestyle{plain}
\def\CC{{\mathbb{C}}}
\def\A{{\mathbb{A}}}
\def\R{{\mathbb{R}}}
\def\B{\mathbb{B}}
\def\X{\mathbb{X}}
\newtheorem{theo}{Theorem}[section]
\newtheorem{asp}[theo]{Assumption}
\theoremstyle{definition}
\newtheorem{exa}[theo]{Example}
\newtheorem{rem}[theo]{Remark}
\newtheorem{defi}[theo]{Definition}
\def\al{\alpha}
\def\om{\omega}
\def\ga{\gamma}
\def\si{\sigma}
\def\la{\lambda}
\def\t{\tau}
\def\calL{{\mathcal{L}}}
\def\calU{{\mathcal{U}}}
\def\calT{{\mathcal{T}}}
\def\calB{{\mathcal{B}}}
\def\calA{{\mathcal{A}}}
\def\C{\mathbb C}
\def\F{\mathnormal F}
\def\T{\mathbb T}
\newcommand{\mscr}{\mathnormal}
\begin{document}

\numberwithin{equation}{section}

\title[]{Abstract boundary delay systems and application to network flow}
\author[]{Andr\'as B\'atkai, Marjeta Kramar Fijav\v{z}, and  Abdelaziz Rhandi}

\address{A. B\'atkai: P\"adagogische Hochschule Vorarlberg,
Liechtensteinerstrasse 33-37, A-6800 Feldkirch, Austria;  andras.batkai@ph-vorarlberg.ac.at}
\address{M. Kramar Fijav\v{z}: University of Ljubljana, Faculty of Civil and Geodetic Engineering,
Jamova 2, SI-1000 Ljubljana, Slovenia / Institute of Mathematics, Physics, and
Mechanics, Jadranska 19, SI-1000 Ljubljana, Slovenia; marjeta.kramar@fgg.uni-lj.si}
\address{A. Rhandi: Dipartimento di Matematica, Universit\`a degli Studi di Salerno, Via Giovanni Paolo II, 132, I-84084 Fisciano (Sa), Italy; arhandi@unisa.it}
\thanks{}
\vskip 0.5cm
 \keywords{Operator semigroup, well-posed linear system, perturbation of a generator, transport equation with delay, network flow, metric graph}

\medskip
\medskip

\bigskip
\begin{abstract}
This paper investigates the well-posedness and positivity of solutions to a class of delayed transport equations on a network.  The material flow is delayed at the vertices and along the edges.  The problem is reformulated as an abstract boundary delay equation, and well-posedness is proved by using the Staffans-Weiss theory.  We also establish spectral theory for the associated delay operators and provide conditions for the positivity of the semigroup.
\end{abstract}

\maketitle
\section{Introduction}

Many systems are subject to time delays, which can affect their well-posedness and stability. Delays can occur within the differential equations themselves, within the boundary conditions, or even in both (e.g., \cite{BP,BDDM,Had,HMR,HL-Book}). This paper will focus on the latter scenario, where delays occur also within the boundary conditions.

Consider a delayed transport process taking place along the edges of a metric graph.
We start by a finite simple directed graph 
 with set of vertices
$V = \{v_1,\dots,v_n\}$ and set of directed edges, $E = \{{e}_1,\dots ,{e}_m\}$ and obtain a metric graph by parametrizing each edge ${e}_j$ as: $\bar{e}_j\colon [0,1]\to {e}_j$, $j=1,\dots,m$. For simplicity, we denote the vertices at the endpoints of the edge ${e}_j$ by  $\bar{e}_j(0)$ and  $\bar{e}_j(1)$, respectively.
We shall assume that in every vertex, there is at least one outgoing as
well as at least one incoming edge; that is, there are no sources nor sinks.

To describe the structure of the graph, we use the \emph{weighted transposed adjacency matrix of the line graph} $\mathbb{B} = (\om_{ij})_{m\times m}$ where $\om_{ij} \ge 0$ are the weights on the edges of the original graph such that
\begin{equation}\label{eqn:adjMat}
\om_{ij} \ne 0 \iff \bar{e}_j(0)=\bar{e}_i(1).
\end{equation}
Thus, the nonzero entries of $\mathbb{B}$ correspond to the adjacent edges, respecting the chosen orientation.
We further assume
\begin{align}\label{eq:w}
\sum_{i=1}^m \om_{ij}=1\qquad\text{for all}\;j\in \{1,\dots,m\},
\end{align}
making matrix $\mathbb{B}$  column stochastic.
For some further properties of this matrix and some other graph matrices, we refer to \cite[Sect.~18]{Positive2017}.

Let $u_j(x,t)$ denote the
distribution of material transported along edge $
{e}_j$, depending on time $t$ and  location
$x \in [0,1]$.
We shall assume that along each edge, the material flows from $1$ to $0$ and is possibly delayed in the vertices as well as along the edges. In each vertex, the incoming material is distributed
into the outgoing edges according to the given weights $\om_{ij}$. Note that by \eqref{eq:w}, the mass is conserved.
The model can be described by the following system.
\begin{align}\label{del-f}
\begin{cases} \partial_t u_j(x,t)=\partial_x u_j(x,t)+ \left(\int^0_{-1} d\mu_j(\theta)u_j(\cdot,\theta+t)\right)(x),
& t\ge 0,\; x\in (0,1), \cr
u_j(1,t)={\sum_{k=1}^m \mathbb{B}_{jk}\left(u_k(0,t)+\int^0_{-1} d\nu_k(\theta) u_k(\cdot,\theta+t)\right)},& t\ge 0,\cr
u_j(x,0)=f_j(x),& x\in (0,1), \cr
u_j(x,\t)=g_j(x,\t),& x\in (0,1),\; \t\in [-1,0],
\end{cases}
\end{align}
for  $j\in \{1,\dots,m\}$.
Here,
\[\mu_{k}:[-1,0]\to \calL(L^p([0,1],\C))_+\quad\text{and}\quad \nu_k:[-1,0]\to \calL(L^p([0,1],\C),\C)_+\]
 are positive operator-valued functions of bounded variation that are continuous and vanishing at zero for $k=1,\dots,m$.
Notice that the delay terms are nonlocal both on the edges as well as in the boundary.

To investigate the properties of the solutions of this system, we have to make a theoretical detour and investigate in Sections \ref{sec:2}, \ref{sec:3}, and \ref{sec:4} the properties of the  abstract boundary delay equation given by:
\begin{align} \label{BED}
\begin{cases}
\dot{z}(t) = A_m z(t) + P z_t, & t \geq 0, \cr
G z(t) = M z(t) + L z_t, & t \geq 0, \cr
z(0) = x,\;z(s) = \varphi(s) & s \in [-1,0].
\end{cases}
\end{align}
Here, the operator $A_m: Z \subset X \to X$ represents a closed operator on the Banach space $X$, where $Z$ is continuously and densely embedded in $X$. The linear operators $G$ and $M: Z \to U$ (with $U$ being a boundary Banach space) are involved in the boundary conditions, and there are two delay operators
\begin{align}\label{eq:opPL}
P f := \int^0_{-1} d\mu(\theta) f(\theta)\quad \text{ and }\quad L f := \int^0_{-1} d\nu(\theta) f(\theta),
\end{align}
$f \in W^{1,p}([-1,0],X)$, where $\mu:[-1,0]\to\mathcal{L}(X)$ and $\nu:[-1,0]\to\mathcal{L}(X,U)$ are functions of bounded variation, continuous at zero, with $\mu(0)=\nu(0)=0$. The initial conditions are given by $x\in X$ and $\varphi\in L^p([-1,0],X)$. Additionally, for any $t \geq 0$, the history function $z_t(\cdot):=z(\cdot+t):[-1,0]\to X$ is defined as $z_t(\theta)=z(t+\theta)$ for any $\theta \in [-1,0]$.

We will prove the well-posedness of Equation \ref{del-f} in Theorem \ref{thm-network}.

The abstract equation mentioned above has been extensively studied in the literature in the case of $M=0$ and $L=0$, as evidenced in \cite{BP,Had}. In these works, the authors employ product spaces and matrix operators to reframe the delay equation as a Cauchy problem. Subsequently, they utilize the Miyadera-Voigt perturbation theorem to establish the well-posedness of the Cauchy problem. Additionally, \cite{Had} incorporates admissible observation operators and their Yosida extension to provide a representation of the solutions.

The problem \eqref{BED} with $P=L=0$ was first studied by Greiner \cite{Gre} and Salamon \cite{Sal2} in the case where $M\in \mathcal{L}(X,U)$, and in \cite{ABE,HMR} in the unbounded case.
The investigation of the aforementioned equations under the conditions of $P=0$ and $M=0$ has been conducted in \cite{ABE,Had-Lah,HMR}. In this particular scenario, the authors have employed the Staffans-Weiss perturbation theorem \cite{Staf} and \cite{Weis5} to establish the well-posedness of the equations, see also \cite{Cu-Zw,TW}.

The network transport process with delays taking place only in the vertices (that is, in the case of $P=0$) has already been considered in \cite{BDR}. 
We address the well-posedness of equation \eqref{BED} in the general case.  To reach this goal we employ the product space $\X=X\times L^p([-1,0],X)$ and transform the boundary delay equation \eqref{BED} into a Cauchy problem governed by a matrix operator $\calA_{P,L}:D(\calA_{P,L})\subset \X\to \X$,  defined by equation \eqref{Big-generator}. By introducing appropriate conditions in Assumption \ref{tildeA}, and utilizing an abstract perturbation result proved in \cite[Theorem 4.1]{HMR}, we establish that the operator $\calA_{P,L}$ coincides with the generator of a closed-loop system, thereby generating a strongly continuous semigroup $(\calT_{P,L}(t))_{t\ge 0}$ on $\X$ that satisfies the equation $$ \calT_{P,L}(t)\left(\begin{smallmatrix}x\\ \varphi\end{smallmatrix} \right)=\left(\begin{smallmatrix}z(t)\\ z_t\end{smallmatrix} \right)$$ for all $t\ge 0$ and $\left(\begin{smallmatrix}x\\ \varphi\end{smallmatrix} \right)\in \X$. Furthermore, we provide details regarding the spectrum of the operator $\calA_{P,L}$ and present a precise formulation of its resolvent operator. This facilitates the demonstration of the positivity of the semigroup $(\calT_{P,L}(t))_{t\ge 0}$. For the theory of strongly continuous semigroups and positivity preserving semigroups we refer the reader to the more recent monographs \cite{EN-NA} and \cite{Positive2017}.

The paper is organized as follows: Section \ref{sec:2} provides a concise summary of infinite-dimensional closed-loop systems. Section \ref{sec:3} is dedicated to proving the well-posedness of the abstract delay equation \eqref{BED} through the Cauchy problem governed by the operator $\calA_{P,L}$. In Section \ref{sec:4}, we calculate the spectrum of the operator $\calA_{P,L}$, its resolvent operator, and obtain the positivity of the semigroup under appropriate conditions. The final section applies the abstract results to study network flow equations with delays given in \eqref{del-f}.

\subsection*{Notation}
Let $X$ be a Banach space with norm $\|\cdot\|$ and $A:D(A)\subset X\to X$ be the generator of a strongly continuous semigroup $(T(t))_{t\ge 0}$ on $X$. We denote by $X_1$ the domain $D(A)$ endowed with the graph norm $\|x\|_1=\|x\|+\|Ax\|$ for $x\in X$, which is a Banach space. By $\rho(A)$  we denote the resolvent set of $A$ and by $R(\al,A):=(\al-A)^{-1},$ $\al\in\rho(A),$ the resolvent operator of $A$. 

We denote by $X_{-1}$ the completion of $X$ with respect to the norm $\|x\|_{-1}:=\|R(\al,A)x\|$ for $x\in X$ and some $\al\in\rho(A)$, so  $X_{-1}$ is a Banach space satisfying
\begin{align*}
X_1\subset X\subset X_{-1},
\end{align*}
densely and with continuous embedding. We mention that if the state space $X$ is reflexive, then the extrapolation space $X_{-1}$ is isomorphic to the topological dual $D(A^\ast)',$ where $A^\ast$ is the adjoint operator of the generator $A$.
Note, that the semigroup $(T(t))_{t\ge 0}$ on $X$ extends to a strongly continuous semigroup $(T_{-1}(t))_{t\ge 0}$ on $X_{-1}$ whose generator $A_{-1}:X\to X_{-1}$ is the extension of $A$ to $X_{-1}$.

We denote by $\calL(Z_1,Z_2)$ the Banach algebra of all linear bounded operators from a Banach space $Z_1$ to a Banach space $Z_2$.

\section{Infinite-dimensional closed-loop systems}\label{sec:2}
Here, we present the unbounded control systems approach to boundary problems. We recall the necessary notation and terminology to formulate the main generation result we will need, Theorem \ref{HMR15}. We mostly rely on the theory of unbounded perturbations of a generator as developed in \cite{HMR}.
{Let us note that a similar theory has been developed simultaneously by Adler, Bombieri, and Engel in \cite{ABE}.}

Throughout this section $\mscr{X}$, $\mscr{U}$, and $\mscr{Z}$ are Banach spaces (we use the same notation  $\|\cdot\|$ to specify the corresponding norms) such that $\mscr{Z}\subset \mscr{X}$ with dense and continuous embedding.

\begin{asp}\label{Greiner}
For closed linear operator $\mscr{A}_m\colon \mscr{Z}\to \mscr{X}$ and  linear {boundary operator} $\mscr{G}\colon \mscr{Z}\to \mscr{U}$ we assume the following.
\begin{enumerate}
  \item 
  The restricted operator $\mscr{A}\subset \mscr{A}_m$ with domain $D(\mscr{A})=\ker(\mscr{G})$ generates a $C_0$-semigroup $(\mscr{T}(t))_{t\ge 0}$ on $\mscr{X}$.
  \item 
  The boundary operator $\mscr{G}\colon \mscr{Z}\to \mscr{U}$ is surjective.
\end{enumerate}
\end{asp}

Now consider the linear system
\begin{align}\label{Boundary-Obser}
\begin{cases} \dot{x}(t)=\mscr{A}_m x(t),\quad x(0)=x^0,& t\ge 0,\cr \mscr{G}x(t)=0,& t\ge 0,\cr y(t)=\mscr{M}x(t),& t\ge 0,\end{cases}
\end{align}
where $\mscr{M}\colon \mscr{Z}\to \mscr{U}$ is a linear operator. We define the so-called \emph{observation operator} as
\begin{align}\label{operator-C}
\mscr{C}:=\mscr{M}_{|D(\mscr{A})}\colon D(\mscr{A})\to \mscr{U}.
\end{align}
According to Assumption \ref{Greiner}, if $x(\cdot)$ takes its values in $D(A)$, we can reformulate system \eqref{Boundary-Obser} as
\begin{align}\label{A-C}
\begin{cases} \dot{x}(t)=\mscr{A} x(t),\quad x(0)=x^0,& t\ge 0,\cr y(t)=\mscr{C}x(t),& t\ge 0.\end{cases}
\end{align}
Clearly, for any $t\ge 0$ and $x^0\in \mscr{X}$ we obtain the solution of \eqref{A-C} as $x(t)=\mscr{T}(t)x^0$. Thus, \begin{align*} y(t):=y(t;x^0)=\mscr{C}\mscr{T}(t)x^0,\quad t\ge 0,\quad x^0\in D(\mscr{A}).\end{align*}
Now, the main question is, whether the  function $t\mapsto y(t;x)$ is in $L^p([0,\alpha],\mscr{X})$ for any $\al>0$ and any $x\in \mscr{X}$. As the observation
 operator $\mscr{C}$ is unbounded, it is not clear how to extend $y(\cdot)$ to an $L^p$-function and we shall need some additional conditions on $\mscr{C}$. We thus define the following class of operators (see \cite{Weis1}).
\begin{defi}\label{C-admissible}
The observation operator $\mscr{C}\ne 0$ is called \emph{$p$-admissible} (or just admissible if there is no ambiguity) for the generator $\mscr{A}$ if for some $\t>0$ (hence for all $\t>0$) there exists a constant $\ga:=\ga(\t)>0$ such that
\begin{align*}
\|
\mscr{C}\mscr{T}(\cdot)x\|_{L^p([0,\t],\mscr{U})}\le \ga \|x\|
\end{align*}
for any $x\in D(\mscr{A})$. In this case, we call the pair $(\mscr{C},\mscr{A})$ \emph{admissible}.\end{defi}
Define the operator $$ (\Psi x)(\cdot):=y(\cdot;x),\quad x\in \mscr{X}.$$ If $(\mscr{C},\mscr{A})$ is admissible, by the density of the domain $D(\mscr{A})$, we can extend $\Psi$ to a linear bounded operator  $\Psi \in \calL(\mscr{X},L^p([0,\t],\mscr{U}))$ for any $\t>0$. Thus, the admissibility of $(\mscr{C},\mscr{A})$ is essential in any study of systems with unbounded observation operator $\mscr{C}\in\calL(D(\mscr{A}),\mscr{U})$.

The \emph{Yosida extension} of $\mscr{C}$ with respect to $\mscr{A}$ is the operator defined as
\begin{align}\label{eq:yosida}
\begin{split}
D(\mscr{C}_\Lambda)&:=\{x\in \mscr{X}: \lim_{s\to+\infty} s\,\mscr{C}R(s,\mscr{A})x\;\text{exists in}\;\mscr{U}\},\\
 \mscr{C}_\Lambda x &:=\lim_{s\to+\infty} s\,\mscr{C}R(s,\mscr{A})x.
\end{split}
\end{align}
According to Weiss \cite[(5.6) and Proposition 5.3]{Weis5}
(see also \cite[Theorem 5.4.8]{Staf}), the admissibility of $(\mscr{C},\mscr{A})$ implies that  $\mscr{T}(t)x\in D(\mscr{C}_\Lambda)$ and
\begin{align*}
\left(\Psi x\right)(t)=\mscr{C}_\Lambda \mscr{T}(t)x
\end{align*}
for all $x\in \mscr{X}$ and a.e. $t\ge 0$.

Let us now add a control term to the boundary conditions of the system \eqref{Boundary-Obser} obtaining the following input-output boundary system
\begin{align}\label{Boundary-Obser-contr}
\begin{cases} \dot{x}(t)=\mscr{A}_m x(t),\quad x(0)=x^0,& t\ge 0,\cr \mscr{G}x(t)=u(t),& t\ge 0,\cr y(t)=\mscr{M}x(t),& t\ge 0,\end{cases}
\end{align}
for some control function $u\colon [0,\infty)\to \mscr{U}$.
Formally, the system \eqref{Boundary-Obser-contr} is \emph{well-posed} if it has a unique integral solution $[0,\infty)\ni t\mapsto x(t;x^0,u)\in \mscr{X}$ which is jointly continuous at $(t,u)\in [0,+\infty)\times L^p([0,\infty),\mscr{U})$, and the output function $t\to y(t,x^0,u)$ is extended to an $L^p$-function satisfying \begin{align*} \|y(\cdot;x^0,u)\|_{L^p([0,\t],\mscr{U})}\le c \left( \|x^0\|+\|u\|_{L^p([0,\t],\mscr{U})}\right)\end{align*} for any $(x^0,u)\in \mscr{X}\times L^p([0,\t),\mscr{U})$ and any $\t>0$.

Since the control is acting at the boundary, the solution can leave the state space $X$ and
some additional conditions must be imposed to enforce the solution to remain in $X$. To this end,  we first assume that Assumptions \ref{Greiner} are satisfied and thus the following \emph{Dirichlet operator}
\begin{align}\label{Dirichlet}
\mscr{D}_\la:=\left(\mscr{G}_{|\ker(\la-\mscr{A}_m)}\right)^{-1}\in\calL(\mscr{U},\mscr{Z})
\end{align}
exists. It follows from \cite[Lemmas 1.2 and 1.3]{Gre} that
$$ \mscr{Z}=D(\mscr{A})\oplus \ker(\lambda-\mscr{A}_{m}), \quad \lambda\in\rho(\mscr{A}).$$
Since $\mscr{D}_\lambda$ takes its values in $Z$ and not in $D(A)$, and in order to rewrite system \eqref{Boundary-Obser-contr} in a consistent way, one has to work in the extrapolation space $X_{-1}$. To this purpose, let us consider the following operator
\begin{align}\label{operator-B}
	\mscr{B}:=(\lambda-\mscr{A}_{-1})\mscr{D}_\lambda\in\mathcal{L}(\mscr{U},\mscr{X}_{-1}), \qquad \lambda\in\rho(\mscr{A}).
\end{align}
For any $u\in \mscr{U}$ and  $\lambda\in\rho(\mscr{A}),$ we have $\lambda\mscr{D}_\lambda u=\mscr{A}_{m}\mscr{D}_\lambda u$, and hence,
\begin{align*}
	\left(\mscr{A}_{m}-\mscr{A}_{-1}\right)\mscr{D}_\lambda u=(\lambda-\mscr{A}_{-1})\mscr{D}_\lambda u=\mscr{B}u.
\end{align*}
From the above direct sum we deduce
\begin{align*}
	\mscr{A}_m=(\mscr{A}_{-1}+\mscr{B}\mscr{G})_{|\mscr{Z}}.
\end{align*}

Now we can rewrite the boundary system \eqref{Boundary-Obser-contr} as the following distributed linear system
\begin{align}\label{A-C-B}
\begin{cases} \dot{x}(t)=\mscr{A_{-1}} x(t)+\mscr{B}u(t),\quad x(0)=x^0,& t\ge 0,\cr y(t)=\mscr{C}x(t),& t\ge 0.\end{cases}
\end{align}
The integral solution of \eqref{A-C-B} is {given by}
\begin{align*}
x(t)=\mscr{T}(t)x^0+\int^t_0 \mscr{T}_{-1}(t-s)\mscr{B}u(s)ds\in \mscr{X}_{-1}\end{align*} for $t\ge 0$ and $x^0 \in \mscr{X}$.
\begin{defi}\label{A-B}
The control operator $\mscr{B}\in\calL(\mscr{U},\mscr{X}_{-1})$ is called \emph{$p$-admissible} (or just admissible if there is no ambiguity) for the generator $\mscr{A}$ if for some $\t>0$ we have
\begin{align*}
\Phi_\t^{A,B} u:=\int^\t_0 \mscr{T}_{-1}(\t-s)\mscr{B}u(s)ds\in \mscr{X}
\end{align*}
 for any $u\in L^p([0,+\infty),\mscr{U})$. In this case we also say that the pair $(\mscr{A},\mscr{B})$ is \emph{admissible}.
\end{defi}
Note that, by the closed graph theorem the admissibility of $(\mscr{A},\mscr{B})$ implies that $\Phi_\t^{A,B}\in \calL(L^p([0,+\infty),\mscr{U}),\mscr{X})$ for any $\t\ge 0$. Thus, if $(\mscr{A},\mscr{B})$ is admissible, the mild solution of the linear system \eqref{Boundary-Obser-contr} satisfies $x(t):=x(t,x^0,u)\in \mscr{X}$ for any $t\ge 0,x^0\in \mscr{X}$ and $u\in L^p([0,\infty),\mscr{U})$. Therefore, the admissibility of $(\mscr{A},\mscr{B})$ is essential in any study of systems with unbounded control operator $B\in\calL(\mscr{U},\mscr{X}_{-1})$.

Let us now assume that $(\mscr{A},\mscr{B})$ and $(\mscr{C},\mscr{A})$ are admissible and let us show how to extend the output function $t\mapsto y(t,x^0,u)$ to an $L^p$-function for any $x^0\in X$ and $u\in L^p_{loc}([0,\infty),\mscr{U})$. To this end, we first define the following spaces
\begin{align*} W^{1,p}_{0,\al}(\mscr{U}):=\{ u\in W^{1,p}([0,\al],\mscr{U}):u(0)=0\},\quad \al>0.\end{align*}
By using integration by part, one can see that $\Phi_t^{A,B} u\in \mscr{Z}$ for any $u\in W^{1,p}_{0,\al}(\mscr{U})$ and $t\in [0,\al]$. We then define the following {\it input-output } operator
\begin{align}\label{in-out}
(\F^{A,B,C}_{\alpha} u)(t):=\mscr{M}\Phi_t^{A,B} u,\quad t\in [0,\al],\quad u\in W^{1,p}_{0,\al}(U).
\end{align}
\begin{defi}\label{well-posed-ABC}
We say that {the triple operator} $(\mscr{A},\mscr{B},\mscr{C})$ (or the system \eqref{A-C-B}) is \emph{well-posed} on $\mscr{X}$, $\mscr{U}$,  if $(\mscr{A},\mscr{B})$ and $(\mscr{C},\mscr{A})$ are both $p$-admissible, and
for any $\al>0,$ there exists a constant $\kappa:=\kappa(\al)>0$ such that
\begin{align}\label{F-estimate}
\| \F^{A,B,C}_{\alpha} u\|_{L^p([0,\al],\mscr{U})}\le \kappa \|u\|_{L^p([0,\al],\mscr{U})}
\end{align}
for any $u\in W^{1,p}_{0,\al}(\mscr{U})$.
\end{defi}
Observe that if the triple $(\mscr{A},\mscr{B},\mscr{C})$  is well-posed then we can extend $\F^{A,B,C}_{\alpha}$ to linear bounded operator $\F^{A,B,C}_{\alpha}\in \calL(L^p([0,\al],\mscr{U}),L^p([0,\al],\mscr{U}))$ for any $\al>0$. Moreover, the output function of \eqref{A-C-B} is given by
\begin{align*}
y(t)=:y(t;x^0,u)=\mscr{C}_\Lambda \mscr{T}(t)x^0+(\F^{A,B,C}_{\alpha} u)(t)
\end{align*}
for any $x^0\in \mscr{X}, u\in L^p_{loc}([0,\infty),\mscr{U})$ and a.e. $t>0$.
To give a complete representation of the output function $t\mapsto y(t),$ we need the following concept introduced by Weiss \cite[Definition 4.2]{Weis5}.
\begin{defi}\label{regular-A-B-C}
A well-posed triple operator $(\mscr{A},\mscr{B},\mscr{C})$ is called {\em regular} (with feedthroughout zero) if
\begin{align*}
\lim_{t\to 0} \frac{1}{t}\int^t_0 \left(\F^{A,B,C}_{\alpha} (\1_{\R^+}(\cdot)z)\right)(\si)\; d\si=0
\end{align*}
for a constant input $z\in \mscr{U}$, where $\1_{\R^+}$ is the constant function equal to $1$ on $\R_+$.
\end{defi}

The following theorem gives a characterization of the regularity for a well-posed triple, see \cite[Theorem 5.6.5]{Staf}.
\begin{theo}\label{regular}
A well-posed triple $(A,B,C)$ is regular if and only if
$\mscr{D}_\lambda v=R(\la,\mscr{A}_{-1})\mscr{B} v\in D(\mscr{C}_\Lambda)$ for all $v\in \mscr{U}$ and some $\la\in\rho(\mscr{A})$.
\end{theo}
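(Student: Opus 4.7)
The plan is to obtain an explicit formula for $\Phi_t^{A,B}v$ on constant inputs, then to link the Cesaro condition to the existence of $\mscr{C}_\Lambda\mscr{D}_\lambda v$ via the Laplace transform and the resolvent identity. Fix $\lambda\in\rho(\mscr{A})$ and $v\in\mscr{U}$. Substituting $\mscr{B}=(\lambda-\mscr{A}_{-1})\mscr{D}_\lambda$, using that $\mscr{D}_\lambda v\in\mscr{X}$ so that $\mscr{T}_{-1}(s)\mscr{D}_\lambda v=\mscr{T}(s)\mscr{D}_\lambda v$, and applying the fundamental theorem of calculus in $\mscr{X}_{-1}$ to the curve $s\mapsto\mscr{T}(s)\mscr{D}_\lambda v$, one obtains
\begin{equation*}
\Phi_t^{A,B}v=\int_0^t\mscr{T}_{-1}(s)(\lambda-\mscr{A}_{-1})\mscr{D}_\lambda v\,ds=\mscr{D}_\lambda v-\mscr{T}(t)\mscr{D}_\lambda v+\lambda\int_0^t\mscr{T}(s)\mscr{D}_\lambda v\,ds.
\end{equation*}
Although the constant input $\1_{\R^+}v$ does not belong to $W^{1,p}_{0,\alpha}(\mscr{U})$, the identity is first derived for the smoothed inputs $u_\varepsilon(s)=\min(s/\varepsilon,1)v\in W^{1,p}_{0,\alpha}(\mscr{U})$ and then passed to the limit by the $L^p$-continuity of $\Phi_t^{A,B}$ that follows from admissibility of $(\mscr{A},\mscr{B})$.

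Two consequences of this representation encode the two directions of the theorem. First, Fubini gives $\int_0^\infty e^{-\mu t}\Phi_t^{A,B}v\,dt=\mu^{-1}\mscr{D}_\mu v$, and applying $\mscr{M}$ on the smoothed inputs and passing to the limit yields
\begin{equation*}
\mu\,\widehat{\F^{A,B,C}_\alpha(\1_{\R^+}v)}(\mu)=\mscr{M}\mscr{D}_\mu v.
\end{equation*}
Second, the resolvent identity $R(\mu,\mscr{A}_{-1})R(\lambda,\mscr{A}_{-1})=\frac{R(\mu,\mscr{A}_{-1})-R(\lambda,\mscr{A}_{-1})}{\lambda-\mu}$ applied to $\mscr{B} v$, together with $\mscr{G}(\mscr{D}_\mu v-\mscr{D}_\lambda v)=0$ so that $\mscr{D}_\mu v-\mscr{D}_\lambda v\in\ker(\mscr{G})=D(\mscr{A})$, gives
\begin{equation*}
\mu\,\mscr{C}R(\mu,\mscr{A})\mscr{D}_\lambda v=\tfrac{\mu}{\lambda-\mu}\bigl(\mscr{M}\mscr{D}_\mu v-\mscr{M}\mscr{D}_\lambda v\bigr),
\end{equation*}
so that the existence of $\mscr{C}_\Lambda\mscr{D}_\lambda v=\lim_{\mu\to\infty}\mu\mscr{C}R(\mu,\mscr{A})\mscr{D}_\lambda v$ is equivalent to the existence of the limit $\lim_{\mu\to\infty}\mscr{M}\mscr{D}_\mu v$ in $\mscr{U}$.

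Combining these two identities closes the equivalence: the vanishing of the Cesaro mean $\frac{1}{t}\int_0^t(\F^{A,B,C}_\alpha(\1_{\R^+}v))(\sigma)\,d\sigma\to 0$ translates, via the Abelian initial-value theorem applied to the Laplace transform, into $\lim_{\mu\to\infty}\mscr{M}\mscr{D}_\mu v=0$, which by the resolvent-identity display is exactly $\mscr{D}_\lambda v\in D(\mscr{C}_\Lambda)$; the reverse implication combines $\mscr{D}_\lambda v\in D(\mscr{C}_\Lambda)$ with a Tauberian step (or equivalently a Lebesgue-point argument on $\Psi\mscr{D}_\lambda v$, valid under admissibility of $(\mscr{C},\mscr{A})$) to return the Cesaro condition. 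The main obstacle is precisely this Abelian--Tauberian correspondence in the $L^p$ setting: the admissibility of $(\mscr{A},\mscr{B})$ and $(\mscr{C},\mscr{A})$ supplies the uniform $L^p$-bounds needed to justify interchanging $\mscr{M}$ with integrals in the Laplace step and to transfer between the Cesaro limit of the output at $t=0^+$ and the Abel limit defining $\mscr{C}_\Lambda$ at $\mscr{D}_\lambda v$.
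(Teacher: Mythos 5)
The paper offers no proof of Theorem \ref{regular}: it is quoted from Staffans and Weiss, so there is no in-house argument to compare yours with; your outline follows the standard Weiss--Staffans route. Its first half is sound. The formula for $\Phi_t^{A,B}v$ on constant inputs, the transfer-function identity $\mu\,\widehat{\F^{A,B,C}_{\alpha}(\1_{\R^+}v)}(\mu)=\mscr{M}\mscr{D}_\mu v$, and the resolvent computation $\mu\,\mscr{C}R(\mu,\mscr{A})\mscr{D}_\lambda v=\tfrac{\mu}{\lambda-\mu}\bigl(\mscr{M}\mscr{D}_\mu v-\mscr{M}\mscr{D}_\lambda v\bigr)$ are all correct, and the last one does prove that $\mscr{D}_\lambda v\in D(\mscr{C}_\Lambda)$ is equivalent to the \emph{existence} of $\lim_{\mu\to+\infty}\mscr{M}\mscr{D}_\mu v$. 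Together with the Abelian initial-value theorem this yields the implication ``regular $\Rightarrow\mscr{D}_\lambda v\in D(\mscr{C}_\Lambda)$''.

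The converse is where the content of the theorem lies, and there you have two genuine gaps. First, ``a Tauberian step \dots valid under admissibility of $(\mscr{C},\mscr{A})$'' is not a proof: Abel convergence of $\mu\hat y(\mu)$ as $\mu\to\infty$ does not imply Ces\`aro convergence of $y$ at $t=0^+$ for a general $L^p$ function, and admissibility of $(\mscr{C},\mscr{A})$ is a statement about $\Psi x=\mscr{C}\mscr{T}(\cdot)x$, not about the zero-state output $\F^{A,B,C}_{\alpha}(\1_{\R^+}v)$. The Lebesgue-point route you allude to requires first showing $\Phi_t^{A,B}u\in D(\mscr{C}_\Lambda)$ for a.e.\ $t$ and identifying $(\F^{A,B,C}_{\alpha}u)(t)$ with $\mscr{C}_\Lambda\Phi_t^{A,B}u$ plus a feedthrough term; this is the substance of Weiss's Theorem 5.8 and cannot be replaced by a generic Abel--Tauberian citation. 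Second, there is a feedthrough mismatch that you paper over when you write that $\lim_\mu\mscr{M}\mscr{D}_\mu v=0$ ``is exactly'' $\mscr{D}_\lambda v\in D(\mscr{C}_\Lambda)$: membership in $D(\mscr{C}_\Lambda)$ only gives existence of that limit, whose value is the feedthrough $Dv=\mscr{M}\mscr{D}_\lambda v-\mscr{C}_\Lambda\mscr{D}_\lambda v$ and need not vanish, whereas Definition \ref{regular-A-B-C} demands that the Ces\`aro mean tend to $0$. Replacing $\mscr{M}$ by $\mscr{M}+\mscr{G}$ leaves $\mscr{C}$, $D(\mscr{C}_\Lambda)$, and well-posedness unchanged but adds $v$ to the Ces\`aro limit, so the right-hand side of the theorem cannot force that limit to be zero. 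What your argument (once the Tauberian step is supplied) actually establishes is the equivalence with ``the Ces\`aro limit exists'', i.e.\ regularity in Weiss's sense with a possibly nonzero feedthrough; you should either flag this discrepancy with the statement as written or prove in addition that the feedthrough vanishes, which is false in general.
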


According to Weiss \cite[Theorem 5.5]{Weis5},
if the triple $(\mscr{A},\mscr{B},\mscr{C})$ is regular then $\Phi_t^{A,B} u\in D(\mscr{C}_\Lambda)$ and $(\F^{A,B,C}_{\alpha} u)(t)=\mscr{C}_\Lambda \Phi_t^{A,B} u$ for all $u\in L^p_{loc}(\R^+,\mscr{U})$ and  a.e. $t\ge 0$. This implies that the state function $x(\cdot)$ and the output function $y(\cdot)$ of the system \eqref{A-C-B} satisfy  $x(t)\in D(\mscr{C}_\Lambda)$ and
\begin{align*}
y(t)=\mscr{C}_\Lambda x(t)
\end{align*}
for all $x^0\in \mscr{X}, u\in L^p_{loc}(\R^+,\mscr{U})$ and  a.e. $t\ge 0$.

We shall further assume that $I_U-\F^{A,B,C}_{\t}$ is invertible in $L^p([0,\t],\mscr{U})$ for some (and hence all) $\t>0$. Then the operator
\begin{align}\label{closed-oper}
\tilde{\mscr{A}}=\mscr{A}_{-1}+\mscr{B}\mscr{C}_\Lambda,\quad D(\tilde{\mscr{A}})=\{x\in D(\mscr{C}_\Lambda): \tilde{\mscr{A}} x\in \mscr{X}\}
\end{align}
generates a strongly continuous semigroup $(\tilde{\mscr{T}}(t))_{t\ge 0}$ on $\mscr{X}$ such that $\tilde{\mscr{T}}(s)x\in D(\mscr{C}_\Lambda)$ for all $x\in \mscr{X}$ and a.e. $s\ge 0$.  Moreover, for $\t>0$ there exists $c_\t>0$ such that
\begin{align*}
\|\mscr{C}_\Lambda \tilde{\mscr{T}}(\cdot)x\|_{L^p([0,\t],\mscr{U})}\le c_\t \|x\|
\end{align*}
for all $x\in \mscr{X}$. In addition we have
\begin{align*}
\tilde{\mscr{T}}(t)x=\mscr{T}(t)x+\int^t_0 \mscr{T}_{-1}(t-s)\mscr{B}\mscr{C}_\Lambda \tilde{\mscr{T}}(s)xds
\end{align*}
for all $x\in \mscr{X}$ and $t\ge 0$. For more details and references see e.g. \cite{Weis5} (for Hilbert setting) and \cite[Chap. 7]{Staf} (for Banach setting).

We introduce the augmented assumption on operators $A,B,C$ as follows.
\begin{asp}\label{tildeA}
In addition to Assumptions \ref{Greiner}, the triple $(\mscr{A},\mscr{B},\mscr{C})$ is regular on $\mscr{X}$, $\mscr{U}$,
and operator $I_U-\F^{A,B,C}_{\t}$ is invertible in $L^p([0,\t],\mscr{U})$ for some $\t>0$.
\end{asp}
Finally, we consider the  boundary perturbed problem
\begin{align}\label{Perturbed-boundary-abst}
\begin{cases}
\dot{x}(t)=\mscr{A}_m x(t),\quad x(0)=x^0,& t>0,\cr \mscr{G}x(t)=\mscr{M}x(t),& t\ge 0.\end{cases}
\end{align}
To this equation we associate the following linear operator
\begin{align}\label{AA-M}
\mscr{A}^M\subseteq\mscr{A}_m,\quad D(A^M)=\{x\in \mscr{Z}: \mscr{G}x=\mscr{M}x\}.
\end{align}
Problem \eqref{Perturbed-boundary-abst} has a unique mild solution if and only if operator $A^M$ generates a strongly continuous semigroup on $\mscr{X}$. The following theorem 
provides sufficient conditions for the generation property and reveals the spectrum  of  $A^M$.
\begin{theo}\label{HMR15}
Let Assumptions \ref{tildeA} be satisfied.
Then the operator $A^M$ defined in \eqref{AA-M} generates a  strongly continuous semigroup $(T^M(t))_{\ge 0}$, given by
\begin{align*}
\mscr{T}^M(t)x=\mscr{T}(t)x+\int^t_0 \mscr{T}_{-1}(t-s)\mscr{B}\mscr{C}_\Lambda \mscr{T}^M(s)x\,ds
\end{align*}
for all $x\in \mscr{X}$ and $t\ge 0$. Moreover, for any $\lambda\in \rho(A)$,
$$\lambda\in \rho(A^M) \Longleftrightarrow 1\in \rho(MD_\lambda) \Longleftrightarrow 1\in \rho(D_\lambda M).$$
In this case, the resolvent operator of $A^M$ 
 is given by $$R(\lambda,A^M)=(I-D_\lambda M)^{-1}R(\lambda,A).$$
 Additionally, the operator $B$ is an admissible control operator for $A^M$ and if we choose $D_\lambda^M$ as $R(\lambda,A_{-1}^M)B$, then

 $$D^M_\lambda=D_\la (I-MD_\lambda)^{-1}=(I-D_\la M)^{-1}D_\la$$ for any $\lambda\in \rho(A)\cap \rho(A^M)$.

\end{theo}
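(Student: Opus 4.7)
The plan is to identify $A^M$ with the closed-loop operator $\tilde{\mscr{A}}=\mscr{A}_{-1}+\mscr{B}\mscr{C}_\Lambda$ from \eqref{closed-oper}, where $\mscr{C}:=\mscr{M}_{|D(\mscr{A})}$ is the observation operator associated with $\mscr{M}$ via \eqref{operator-C} and $\mscr{B}$ is the control operator from \eqref{operator-B}. Under Assumption \ref{tildeA}, the theory recalled before \eqref{closed-oper} already delivers a $C_0$-semigroup $(\tilde{\mscr{T}}(t))_{t\ge 0}$ generated by $\tilde{\mscr{A}}$, together with the variation-of-constants formula $\tilde{\mscr{T}}(t)x=\mscr{T}(t)x+\int_0^t \mscr{T}_{-1}(t-s)\mscr{B}\mscr{C}_\Lambda\tilde{\mscr{T}}(s)x\,ds$. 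To obtain $A^M=\tilde{\mscr{A}}$, I would take $x\in D(A^M)\subset\mscr{Z}$ and decompose $x=R(\lambda,\mscr{A})(\lambda-\mscr{A}_m)x+\mscr{D}_\lambda\mscr{G}x$ using $\mscr{Z}=D(\mscr{A})\oplus\ker(\lambda-\mscr{A}_m)$. Both summands lie in $D(\mscr{C}_\Lambda)$: the first by admissibility of $\mscr{C}$, the second because Theorem \ref{regular} applied under Assumption \ref{tildeA} gives $\mscr{D}_\lambda\mscr{U}\subset D(\mscr{C}_\Lambda)$. Since $\mscr{C}_\Lambda$ extends $\mscr{M}$ on $\mscr{Z}$, the identity $\mscr{A}_m=(\mscr{A}_{-1}+\mscr{B}\mscr{G})_{|\mscr{Z}}$ combined with the boundary condition $\mscr{G}x=\mscr{M}x=\mscr{C}_\Lambda x$ yields $\mscr{A}_m x=\mscr{A}_{-1}x+\mscr{B}\mscr{C}_\Lambda x\in\mscr{X}$, so $x\in D(\tilde{\mscr{A}})$ and $A^M x=\tilde{\mscr{A}}x$; the reverse inclusion is analogous.

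Next, for the spectral characterization I would work directly from $D(A^M)=\{x\in\mscr{Z}:\mscr{G}x=\mscr{M}x\}$. Fix $\lambda\in\rho(\mscr{A})$ and $y\in\mscr{X}$ and seek $x\in D(A^M)$ with $(\lambda-A^M)x=y$. Writing $x=R(\lambda,\mscr{A})y+\mscr{D}_\lambda u$ with $u=\mscr{G}x\in\mscr{U}$, the equation $(\lambda-\mscr{A}_m)x=y$ is automatic and the boundary condition $\mscr{G}x=\mscr{M}x$ becomes
\[ (I-\mscr{M}\mscr{D}_\lambda)u=\mscr{M}R(\lambda,\mscr{A})y, \]
which is uniquely solvable in $\mscr{U}$ precisely when $1\in\rho(\mscr{M}\mscr{D}_\lambda)$, yielding the first equivalence. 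The equivalence with $1\in\rho(\mscr{D}_\lambda\mscr{M})$ follows from the classical fact that $\mscr{M}\mscr{D}_\lambda$ and $\mscr{D}_\lambda\mscr{M}$ share the same nonzero spectrum, together with the algebraic commutation $\mscr{D}_\lambda(I-\mscr{M}\mscr{D}_\lambda)^{-1}=(I-\mscr{D}_\lambda\mscr{M})^{-1}\mscr{D}_\lambda$. Substituting the resulting $u$ back into $x$ and using this identity yields
\[ x=[I+(I-\mscr{D}_\lambda\mscr{M})^{-1}\mscr{D}_\lambda\mscr{M}]R(\lambda,\mscr{A})y=(I-\mscr{D}_\lambda\mscr{M})^{-1}R(\lambda,\mscr{A})y, \]
which is the claimed resolvent formula. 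For the last assertion, admissibility of $\mscr{B}$ for $A^M$ is part of the closed-loop theory invoked in the first paragraph (see \cite[Chap.~7]{Staf} or \cite[Thm.~4.1]{HMR}). To obtain the explicit form of $D_\lambda^M$, I would interpret $D_\lambda^M u$ as the unique $z\in\mscr{Z}$ satisfying $(\lambda-\mscr{A}_m)z=0$ and $(\mscr{G}-\mscr{M})z=u$; writing $z=\mscr{D}_\lambda v$ reduces the boundary condition to $(I-\mscr{M}\mscr{D}_\lambda)v=u$, hence $D_\lambda^M u=\mscr{D}_\lambda(I-\mscr{M}\mscr{D}_\lambda)^{-1}u$, and the alternative representation $(I-\mscr{D}_\lambda\mscr{M})^{-1}\mscr{D}_\lambda$ follows from the same commutation identity.

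The main obstacle is making the identification $D(A^M)=D(\tilde{\mscr{A}})$ fully rigorous, and this is exactly where regularity of the triple becomes indispensable: without $\mscr{D}_\lambda\mscr{U}\subset D(\mscr{C}_\Lambda)$, one cannot assert that every $x\in D(A^M)\subset\mscr{Z}$ lies in $D(\mscr{C}_\Lambda)$, and the equivalence between the boundary condition $\mscr{G}x=\mscr{M}x$ and the closed-loop identity $\mscr{A}_m x=\mscr{A}_{-1}x+\mscr{B}\mscr{C}_\Lambda x\in\mscr{X}$ breaks down. Once this identification is secured, the generation statement and variation-of-constants formula are inherited from the closed-loop theory, while the spectral claims, the resolvent formula, and the representation of $D_\lambda^M$ are straightforward algebraic manipulations with $\mscr{D}_\lambda$ and $R(\lambda,\mscr{A})$.
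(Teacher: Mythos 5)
Your overall strategy is genuinely different from the paper's: the paper disposes of the generation, the spectral equivalence and the resolvent formula in one stroke by identifying $A^M$ with the closed-loop generator $\tilde{\mscr{A}}$ and citing \cite[Theorem 4.1]{HMR}, and then obtains the formula for $D_\lambda^M$ by Laplace-transforming the identity $\Phi_t^{A,B}=\Phi_t^{A^M,B}(I-\F_t^{A,B,C})$ supplied by \cite[Theorem 6.1]{Weis5}. Your plan is a direct Greiner-style verification. The identification $D(A^M)=D(\tilde{\mscr{A}})$ is essentially right, but note that you need more than $\mscr{D}_\lambda\mscr{U}\subset D(\mscr{C}_\Lambda)$: from $s\,\mscr{C}R(s,\mscr{A})\mscr{D}_\lambda v=\tfrac{s}{s-\lambda}\left(\mscr{M}\mscr{D}_\lambda v-\mscr{M}\mscr{D}_s v\right)$ one sees that $\mscr{C}_\Lambda\mscr{D}_\lambda v=\mscr{M}\mscr{D}_\lambda v$ holds exactly when $\mscr{M}\mscr{D}_s v\to 0$ as $s\to+\infty$, i.e.\ when the feedthrough vanishes. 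So the assertion ``$\mscr{C}_\Lambda$ extends $\mscr{M}$ on $\mscr{Z}$'' is a consequence of regularity \emph{with feedthrough zero} (Definition \ref{regular-A-B-C}, cf.\ \cite[Lemma 3.6]{HMR}) and must be invoked as such; without it $\mscr{B}\mscr{C}_\Lambda x$ need not equal $\mscr{B}\mscr{M}x$ and the identification collapses.

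The genuine gaps are in the second half. First, your spectral argument only proves $1\in\rho(\mscr{M}\mscr{D}_\lambda)\Rightarrow\lambda\in\rho(A^M)$: unique solvability of $(I-\mscr{M}\mscr{D}_\lambda)u=\mscr{M}R(\lambda,\mscr{A})y$ for every $y$ is \emph{implied by} $1\in\rho(\mscr{M}\mscr{D}_\lambda)$ but is not equivalent to it, since the right-hand sides range only over $\mscr{M}R(\lambda,\mscr{A})\mscr{X}$ and not over all of $\mscr{U}$. The converse needs a separate proof of surjectivity of $I-\mscr{M}\mscr{D}_\lambda$ (e.g.\ via the decomposition $\mscr{Z}=D(A^M)\oplus\ker(\lambda-\mscr{A}_m)$ valid for $\lambda\in\rho(A^M)$, combined with surjectivity of $\mscr{G}-\mscr{M}$, which itself rests on $1\in\rho(\mscr{M}\mscr{D}_\mu)$ for some large $\mu$, supplied by the invertibility of $I-\F^{A,B,C}_{\t}$ in Assumption \ref{tildeA}); this is precisely the content the paper imports from \cite[Theorem 4.1]{HMR}. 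Second, your derivation of $D_\lambda^M=D_\lambda(I-MD_\lambda)^{-1}$ \emph{assumes} that $R(\lambda,A^M_{-1})Bu$ is the unique $z\in\mscr{Z}$ with $(\lambda-\mscr{A}_m)z=0$ and $(\mscr{G}-\mscr{M})z=u$; that interpretation is exactly what has to be proved, as it amounts to showing that $(A^M,\mscr{G}-\mscr{M})$ again fits the Greiner framework with the same control operator $B$. The paper sidesteps this by first admitting $B$ for $A^M$ via \cite[Theorem 6.1]{Weis5} and then reading off $D_\lambda=D_\lambda^M-D_\lambda^M MD_\lambda$ from the Laplace transform of the control maps; you should either reproduce that argument or prove directly that $R(\lambda,A^M_{-1})B$ maps $\mscr{U}$ into $\ker(\lambda-\mscr{A}_m)$ with $(\mscr{G}-\mscr{M})R(\lambda,A^M_{-1})B=I$.
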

\begin{proof}
First note that the operator $\mscr{A}^{\mscr{M}}$ coincides with the operator $\tilde{\mscr{A}}$ defined in \eqref{closed-oper}, hence
the generating properties and spectral theory associated with the operator $A^M$ follow by \cite[Theorem 4.1]{HMR}.
On the other hand, the admissibility of the control operator $B$ for $A^M$ can be inferred from \cite[Theorem 6.1]{Weis5}, see also \cite[Theorem 7.1.2]{Staf}. According to this reference, the control maps  associated with $B$ and $A^M$ are expressed as $$\Phi^{A,B}_t=\Phi^{A^M,B}_t (I-F_t^{A,B,C}).$$
By taking the Laplace transform of both sides of the equation, we obtain for $\la\in \rho(A)\cap\rho(A^M)$, $$ D_\la= D^M_\la-D^M_\la MD_\la,$$ 
see also \cite[Theorem 3.2]{BBDHL}. This concludes the proof.
\end{proof}

\section{Abstract boundary delay equations}\label{sec:3}

This section aims to introduce a perturbation approach that establishes the well-posedness of the abstract boundary delay equation \eqref{BED}.
Since the left translation semigroup plays a seminal role in understanding delayed processes, we start by presenting a classical illustration of a regular linear system governed by it.
\begin{exa}\label{shift-S}
Let $z\colon [-1,+\infty)\to X$ be a given function. We define another function $z_t\colon [-1,0]\to X$ as follows:
 \begin{align*} z_t(\theta):=\begin{cases} z(t+\theta),& -t\le \theta\le 0,\cr \varphi(t+\theta),& -1\le \theta< -t,\end{cases}  \end{align*} where  $0\le t< 1$ and $\varphi\in L^p([-1,0],X)$ for a Banach space $X$ and a real number $p\ge 1$. The function $\varrho(t,\theta)=z_t(\theta)=z(t+\theta)$ for $t\ge 0$ and $\theta\in [-1,0]$ serves as the solution to the  boundary value problem
\begin{align}\label{translation-equation}
\begin{cases}
\frac{\partial}{\partial t} \varrho(t,\theta) = \frac{\partial}{\partial \theta} \varrho(t,\theta),&t \geq 0,\; \theta \in [-1,0],\\
 \varrho(t,0) = z(t),& t \geq 0,\\
\varrho(0,\theta) = \varphi(\theta),&\theta \in [-1,0].
\end{cases}
\end{align}
It is worth noting that the {solution} can be rewritten as
\begin{align}\label{shift-state} z_t=S(t)\varphi+\phi_t z(\cdot),\qquad t\ge 0,\end{align}
where $(S(t))_{t\ge 0}$ represents the left {translation} semigroup on $L^p([-1,0],X)$ and is defined as
\begin{align}\label{shift-sg} (S(t)\varphi)(\theta):=\begin{cases} 0,& -t\le \theta\le 0,\cr \varphi(t+\theta),& -1\le \theta< -t,\end{cases} \end{align}
Additionally, $\phi_t \colon L^p([0,+\infty),X)\to L^p([-1,0],X)$ is defined as
\begin{align}\label{control-maps-shift} (\phi_t z(\cdot))(\theta):=\begin{cases} z(t+\theta),& -t\le \theta\le 0,\cr 0,& -1\le \theta< -t.\end{cases}\end{align}
Let us now employ Greiner's approach, as demonstrated above, to reformulate the boundary system \eqref{translation-equation} as a distributed system. The trace operator, denoted by the Dirac operator $\delta_0 f=f(0)$, is surjective from the Sobolev space $W^{1,p}([-1,0],X)$ to $X$. The operator $$ Q\varphi=\varphi',\quad D(Q)=\ker\delta_0=\{\varphi\in W^{1,p}([-1,0],X):\varphi(0)=0\}$$ generates the left translation semigroup $(S(t))_{t\ge 0}$ on $L^p([-1,0],X)$. The Dirichlet operator associated with $ \frac{\partial}{\partial \theta}$ and $\delta_0$ is denoted as $e_\la\colon  X\to L^p([-1,0],X)$, where $(e_\la x)(\theta):=e^{\lambda \theta}x$ for all $\theta\in [-1,0]$. By selecting $\beta:=(\la-Q_{-1})e_\la$ for $\la\in \rho(Q)=\mathbb{C}$, the control maps $\phi_t$ defined in \eqref{control-maps-shift} can also be expressed as $$ \phi_t z=\int^t_0 S_{-1}(t-s)\beta z(s)ds,\qquad t\ge 0.$$ Therefore, $\beta$ serves as an admissible control operator for $Q$.
Note, that one can show by the resolvent equation that $\beta$ does not depend on $\lambda$, see \cite[Lem.~1.3]{Gre}.
\end{exa}

\begin{rem}
To be consistent with the notations of the previous section, the operators $\delta_0,\,Q,\,\beta ,\,e_\la$, and $\phi_t$ are respectively equal to the operators $G,\,A,\,B,\,D_\la$, and $\Phi_t^{A,B}$ used in the previous section. The motivation for using different notations will become clear in the sequel.
\end{rem}

To bring the delay operators into the picture, we move to larger Banach spaces and define
\begin{align*}
& \mathcal{Z}:=Z\times W^{1,p}([-1,0],X),\cr
& \mathbb{X}:=X\times{Y},\qquad \|(\begin{smallmatrix}x\\f\end{smallmatrix})\|=\|x\|+\|f\|_p,
\end{align*}
where $p\ge 1$ and 
\begin{align*}
Y:=L^p([-1,0],X)\quad\text{is endowed with the norm}\quad \|f\|_p^p= \int^0_{-1}\|f(\si)\|^p\; d\si.
\end{align*}
\begin{theo}\label{Main-result-1}
Let the Assumption \ref{tildeA} be satisfied {and let $P$ and $L$ be the operators defined in \eqref{eq:opPL}}. Then the operator $\calA_{P,L}\colon D(\calA_{P,L})\subset \mathbb{X}\to\mathbb{X}$, defined by
\begin{align}\label{Big-generator}
\begin{split}
\calA_{P,L}&:=\begin{pmatrix} A_m&P\\ 0& \frac{d}{ds}\end{pmatrix},\cr D(\calA_{P,L})&:=\left\{(\begin{smallmatrix}x\\\varphi\end{smallmatrix})\in \mathcal{Z}:(G-M)x=L\varphi,\;x=\varphi(0)\right\}
\end{split}\end{align}
generates a strongly continuous semigroup $(\mathcal{T}_{P,L}(t))_{t\ge 0}$ on $\mathbb{X}$ and for any initial condition $(\begin{smallmatrix}x\\\varphi\end{smallmatrix})\in \mathbb{X}$, we have
\begin{align}\label{Sol}
(\begin{smallmatrix}z(t)\\z_t\end{smallmatrix})=\mathcal{T}_{P,L}(t)(\begin{smallmatrix}x\\\varphi\end{smallmatrix}),\quad t\ge 0,
\end{align}
where $z(\cdot)$ is the mild solution to \eqref{BED}.
\end{theo}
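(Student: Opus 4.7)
My plan is to apply Theorem~\ref{HMR15} to an augmented closed-loop system on the product space $\mathbb{X}$. Taking boundary space $\mathcal{U}:=U\times X$, I introduce
\[ \mathfrak{A}_m := \begin{pmatrix} A_m & P\\ 0 & \frac{d}{ds}\end{pmatrix}, \quad \mathfrak{G}\begin{pmatrix}x\\ \varphi\end{pmatrix} := \begin{pmatrix} Gx\\ \varphi(0)\end{pmatrix},\quad \mathfrak{M}\begin{pmatrix}x\\ \varphi\end{pmatrix} := \begin{pmatrix} Mx + L\varphi\\ x\end{pmatrix}, \]
so that the condition $\mathfrak{G}(x,\varphi)=\mathfrak{M}(x,\varphi)$ is exactly the pair of boundary conditions defining $D(\calA_{P,L})$. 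Thus $\calA_{P,L}$ coincides with the perturbed operator $\mathfrak{A}^{\mathfrak{M}}$ from \eqref{AA-M} built from this data, and it suffices to verify Assumptions~\ref{Greiner} and \ref{tildeA} for the augmented triple $(\mathfrak{A}_m,\mathfrak{G},\mathfrak{M})$.

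For Assumption~\ref{Greiner}, surjectivity of $\mathfrak{G}$ follows from surjectivity of $G$ and of the trace $\delta_0\colon W^{1,p}([-1,0],X)\to X$. On the kernel $\ker\mathfrak{G}=D(A)\times D(Q)$, with $Q$ the translation generator of Example~\ref{shift-S}, $\mathfrak{A}$ splits as
\[ \mathfrak{A} = \begin{pmatrix} A & 0\\ 0 & Q\end{pmatrix} + \begin{pmatrix} 0 & P\\ 0 & 0\end{pmatrix}. \]
The diagonal part generates the product $C_0$-semigroup $T(\cdot)\oplus S(\cdot)$ on $\mathbb{X}$, while the off-diagonal perturbation is bounded from $D(Q)$ into $X$ since $\mu$ has bounded variation; the classical delay-semigroup Miyadera--Voigt argument (cf.~\cite{BP}), available thanks to $\mu(0)=0$ and the continuity of $\mu$ at zero, then yields that $\mathfrak{A}$ itself generates a $C_0$-semigroup on $\mathbb{X}$.

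For Assumption~\ref{tildeA} I would first compute the Dirichlet operator by solving $(\lambda-\mathfrak{A}_m)(x,\varphi)=0$ together with $\mathfrak{G}(x,\varphi)=(u,y)$: this gives $\varphi=e_\lambda y$ and $x=D_\lambda u+R(\lambda,A)Pe_\lambda y$, whence a short calculation shows that
\[ \mathfrak{B}\begin{pmatrix}u\\ y\end{pmatrix} = (\lambda-\mathfrak{A}_{m,-1})\mathfrak{D}_\lambda\begin{pmatrix}u\\ y\end{pmatrix} = \begin{pmatrix} Bu\\ \beta y\end{pmatrix} \]
is block-diagonal. Admissibility of $\mathfrak{B}$ for $\mathfrak{A}$ therefore reduces to admissibility of $B$ for $A$ (hypothesis) and of $\beta$ for $Q$ (Example~\ref{shift-S}). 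The observation $\mathfrak{C}(x,\varphi)=(Cx+L\varphi,\,x)$ is a block-diagonal admissible operator plus a bounded term coming from $L$, and regularity of $(\mathfrak{A},\mathfrak{B},\mathfrak{C})$ then follows from Theorem~\ref{regular} applied block-wise together with regularity of the original triple $(A,B,C)$. The remaining invertibility of $I-\mathfrak{F}^{\mathfrak{A},\mathfrak{B},\mathfrak{C}}_\tau$ I would obtain from the corresponding property of $(A,B,C)$ by a short-time contraction argument: the extra off-diagonal blocks of $\mathfrak{F}_\tau$ stemming from $P$ and $L$ can be made arbitrarily small in norm on short intervals thanks to continuity of $\mu,\nu$ at zero, and invertibility then propagates to every $\tau>0$ by a standard semigroup composition.

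Theorem~\ref{HMR15} then delivers the $C_0$-semigroup $(\calT_{P,L}(t))_{t\ge 0}$ generated by $\calA_{P,L}=\mathfrak{A}^{\mathfrak{M}}$. To identify the orbits with $(z(t),z_t)$ as in \eqref{Sol}, I would note that if $\calT_{P,L}(t)(x,\varphi)=(u(t),v(t))$ then $v$ satisfies the translation equation of Example~\ref{shift-S} with boundary value $v(t)(0)=u(t)$, which forces $v(t)=u_t$; the first-component equation then reproduces \eqref{BED}, and uniqueness yields $u=z$. The main technical obstacle is the regularity and $\tau$-invertibility step for the augmented triple, because $P$ and $L$ couple the two components non-trivially; the saving grace is the block-diagonal structure of $\mathfrak{B}$ together with the boundedness of $P$ and $L$ on $W^{1,p}([-1,0],X)$.
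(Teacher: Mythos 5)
Your reduction of $D(\calA_{P,L})$ to a single boundary condition $\mathfrak{G}w=\mathfrak{M}w$ on the product space is correct, and it is a genuinely different route from the paper's: there, one first absorbs $M$ alone into the generator on $X$ (producing $A^M$ via Theorem \ref{HMR15}), forms the standard delay generator $\operatorname{diag}(A^M,\tfrac{d}{d\theta})$ with the coupled domain $\varphi(0)=x$ — whose semigroup is explicitly known and lower triangular — and only then feeds back the pair $(L,P)$ through $\mathcal{C}=\left(\begin{smallmatrix}0&L\\0&P\end{smallmatrix}\right)$ and $\mathcal{B}=\left(\begin{smallmatrix}B&I\\0&0\end{smallmatrix}\right)$. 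You instead put $P$ into the maximal operator and push $M$, $L$ and the coupling $\varphi(0)=x$ into one boundary perturbation. That choice is the source of the main gap: the unperturbed semigroup $\mathfrak{T}(t)$ generated by $\mathfrak{A}=\left(\begin{smallmatrix}A&P\\0&Q\end{smallmatrix}\right)$ on $D(A)\times D(Q)$ is upper triangular with a nontrivial $(1,2)$ entry (a convolution of $T(\cdot)$ with $P_\Lambda S(\cdot)$), so admissibility of $\mathfrak{B}$ and $\mathfrak{C}$ and well-posedness of $\mathfrak{F}_\tau$ do \emph{not} reduce block-wise to the known admissibility of $B,\beta,C,L,P$ for $A$ and $Q$ separately. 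Concretely, the $(1,2)$ entry of the control map is $\int_0^t[\mathfrak{T}_{-1}(t-s)]_{12}\,\beta y(s)\,ds$, and the corresponding entry of $\mathfrak{F}_\tau$ is the unbounded observation $M$ applied to it; these cross terms, which compose $M$ with the $P$-induced coupling, are exactly where the analytic work lies and are not addressed. The assertion that the $L$-contribution to $\mathfrak{C}$ is ``a bounded term'' is also false: $L$ is only defined on $W^{1,p}([-1,0],X)$ and is merely admissible for $Q$, not bounded on $L^p([-1,0],X)$.

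The invertibility step has a second gap: your short-time contraction cannot work as stated, because the $(1,1)$ block of $\mathfrak{F}_\tau$ is $F^{A,B,C}_\tau$ itself, which is not small for small $\tau$; Assumption \ref{tildeA} only gives invertibility of $I-F^{A,B,C}_\tau$. You would need to factor $I-\mathfrak{F}_\tau=\bigl(I-\mathfrak{F}^{\mathrm{diag}}_\tau\bigr)\bigl(I-(I-\mathfrak{F}^{\mathrm{diag}}_\tau)^{-1}\mathfrak{F}^{\mathrm{off}}_\tau\bigr)$ and control $\|(I-F^{A,B,C}_\tau)^{-1}\|$ uniformly as $\tau\to0$ (possible by causality, but it must be argued), on top of proving that the off-diagonal blocks — including the $M$-cross term above — are small. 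Likewise, the Miyadera--Voigt generation of $\mathfrak{A}|_{\ker\mathfrak{G}}$ requires the smallness of $\int_0^{t_0}\|PS(s)\varphi\|\,ds$, which for $p=1$ hinges on $|\mu|([-\epsilon,0])\to0$ and should be verified rather than cited. By contrast, in the paper's arrangement the whole delay perturbation sits in the observation $\mathcal{C}$, the input--output operator factors through $F^{Q,\beta,L}$ and $F^{Q,\beta,P}$, and its norm is dominated by $|\mu|([-\alpha,0])+|\nu|([-\alpha,0])\to0$, which is what makes the smallness argument clean. Your skeleton is plausibly salvageable, but the cross-term estimates constitute the substance of the proof and are missing.
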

\begin{proof}
We split the proof in two main steps.

\noindent\underline{Step 1.} We reconfigure the original boundary value problem \eqref{BED} into the following input-output system.\begin{align} \label{BED1}
\begin{cases}
\dot{z}(t) = A_m z(t) + u_2(t),\quad z(0) = x, & t \geq 0, \cr
G z(t) = M z(t) + u_1(t), & t \geq 0, \cr
y(t) = \begin{pmatrix} Lz_t \\ Pz_t \end{pmatrix},& t\ge 0,\cr z(s) = \varphi(s) & s \in [-1,0],
\end{cases}
\end{align}
subject to the input $u(t) = \begin{pmatrix} u_1(t) \\ u_2(t) \end{pmatrix} \in U \times X$, which serves as the output $y(t)$. By considering the matrix $$B^I:=\begin{pmatrix} B & I \end{pmatrix}\colon U\times X\to X_{-1},$$
with $B$ defined in \eqref{operator-B},
we can use the construction in Section 2, Theorem \ref{HMR15}
and \cite[the proof of Theorem 4.3]{HMR} to transform the system \eqref{BED1} into the following form
\begin{align} \label{BED2}
\begin{cases}
\dot{z}(t) = A_{-1}^M z(t) + B^I u(t), \quad z(0) = x, & t \geq 0, \cr
u(t) = y(t) = \begin{pmatrix} Lz_t \\ Pz_t \end{pmatrix}, & t \geq 0, \cr
z(s) = \varphi(s) & s \in [-1,0],
\end{cases}
\end{align}
where the operator $A^M$ is defined in \eqref{AA-M}. The mild solution of \eqref{BED2} is given by \begin{align}\label{z-sol-A-M}\begin{split} z(t)&=T^M(t)x+\int^t_0 T^M_{-1}(t-s) B^I u(s)ds,\cr &=:T^M(t)x+\Phi^{A^M,B^I}_tu\end{split}\end{align} for any $t\ge 0$, see \cite[Theorem 4.3]{HMR}.

We now introduce the operators
\begin{align*}
& \mathcal{B}:=\begin{pmatrix} B& I\\ 0&0\end{pmatrix}\colon  U\times X\to X_{-1}\times{Y},\cr &  \mathbb{P}:=\begin{pmatrix} 0& L\\ 0&P\end{pmatrix}\colon \mathcal{Z}\to U\times X.
\end{align*}
If we set \begin{align*}
W\colon [0,\infty)\ni t\mapsto W(t)= (\begin{smallmatrix}z(t)\\z_t\end{smallmatrix}),
\end{align*}
then the system \eqref{BED2} becomes
\begin{align}\label{BED3}
\begin{cases}
\dot{W}(t)=\mathcal{A} W(t)+\mathcal{B}u(t), &t\ge 0,\,\\
 W(0)=\left(\begin{smallmatrix} x\\ \varphi\end{smallmatrix}\right),&\\
u(t)=y(t)=\mathcal{C} W(t),& t\ge 0,
\end{cases}
\end{align}
where
\begin{align*} \mathcal{A}&:=\begin{pmatrix} A^M&0\\ 0& \frac{d}{d\theta}\end{pmatrix},\\
D(\mathcal{A})&:=\left\{\left(\begin{smallmatrix} x\\ \varphi\end{smallmatrix}\right)\in D(A^M)\times W^{1,p}([-1,0],X):\varphi(0)=x \right\},\end{align*}
and
\begin{align*}
\mathcal{C}:=\mathbb{P}_{|D(\mathcal{A})}.
\end{align*}
Due to Assumption \ref{tildeA}, it is well-known, see \cite[Theorem 3.25]{BP}, that the operator  $\mathcal{A}$ generates a strongly continuous semigroup $(\mathcal{T}(t))_{t\ge 0}$ on $\mathbb{X}$ given by
\begin{align*}\mathcal{T}(t)=\begin{pmatrix}T^M(t)& 0\\ T_t^M & S(t) \end{pmatrix},\quad t\ge 0, \end{align*} where $T^M(\cdot)$ is the strongly continuous semigroup generated by $A^M$ (see  Theorem \ref{HMR15}), $T^M_t\colon  X\to L^p([-1,0],X)$ is the family of operators defined as $$ (T^M_t x)(\theta)=\begin{cases} T^M(t+\theta)x,& -t\le \theta\le 0,\cr 0,& -1\le \theta< -t \end{cases}$$
for any $t\ge 0$, {and $(S(t))_{t\ge 0}$ is the translation semigroup given in Example \ref{shift-S}. Then  the solution to \eqref{BED3} is obtained as}
\begin{align}\label{W-sol}\begin{split} W(t)&=\mathcal{T}(t)(\begin{smallmatrix} x\\ \varphi\end{smallmatrix})+\int^t_0 \calT_{-1}(t-s)\calB u(s)ds\cr &=: \mathcal{T}(t)(\begin{smallmatrix} x\\ \varphi\end{smallmatrix})+\Phi^{\calA,\calB}_t u \end{split}\end{align}
for any $t\ge 0$ and $(\begin{smallmatrix} x\\ \varphi\end{smallmatrix})\in \mathbb{X}$. On the other hand, from \eqref{z-sol-A-M}, \eqref{shift-state}, and \eqref{control-maps-shift} we obtain
\begin{align*}
W(t)&= \begin{pmatrix}z(t)\\z_t\end{pmatrix} = \begin{pmatrix}T^M(t)x+\Phi^{A^M,B^I}_t u\\ S(t)\varphi+\phi_t z(\cdot) \end{pmatrix}\cr
&=\begin{pmatrix}T^M(t)x \\ T^M_t x+S(t)\varphi \end{pmatrix}+\begin{pmatrix}\Phi^{A^M,B^I}_t u\\ \phi_t \Phi^{A^M,B^I}_\cdot u \end{pmatrix}\cr &=\mathcal{T}(t)\left(\begin{smallmatrix} x\\ \varphi\end{smallmatrix}\right)+\begin{pmatrix}\Phi^{A^M,B^I}_t u\\ \phi_t \Phi^{A^M,B^I}_\cdot u \end{pmatrix}.
\end{align*}
Combining this with \eqref{W-sol}, we have
\begin{align}\label{mathscrB}
\Phi^{\mathcal{A},\mathcal{B}}_t u=\begin{pmatrix}\Phi^{A^M,B^I}_t u\\ \phi_t \Phi^{A^M,B^I}_\cdot u \end{pmatrix}.
\end{align}

\noindent\underline{Step 2.} Let us now verify the conditions of  Theorem \ref{HMR15}.
From \eqref{mathscrB} and the fact that $(A^M,B^I)$ and $(Q,\beta)$ are admissible, we deduce that $\Phi^{\mathcal{A},\mathcal{B}}_t u\in \mathbb{X}$ for any $t\ge 0$ and $u\in L^p(\R^+,U\times X)$. Thus $(\mathcal{A},\mathcal{B})$ is admissible. Furthermore, {by the same computation as} in \cite[Lemma 6.2]{Had}, one can see that $(\mathcal{C},\mathcal{A})$ is admissible. Now, let us construct an input-output operator for the triple $(\mathcal{A},\mathcal{B},\mathcal{C})$. To this purpose, we need to compute the Yosida extension of $\mathcal{C} $ with respect to the generator $\mathcal{A}$. For sufficiently large  real $\la>0,$  $x\in X$ and $\varphi\in{Y}$, we have by \cite[Proposition 3.19]{BP},
\begin{align*}
R(\la,\calA)=\begin{pmatrix} R(\la,A^M)& 0\\ e_\la R(\la,A^M)& R(\la,Q)\end{pmatrix}
\end{align*}
and \begin{align*}
\mathcal{C}\la R(\la,\calA)\begin{pmatrix}x\\ \varphi\end{pmatrix}=\begin{pmatrix}L e_\la (\la R(\la,A^M)x)+L\la R(\la,Q)\varphi\\ P e_\la (\la R(\la,A^M)x)+P\la R(\la,Q)\varphi\end{pmatrix},
\end{align*}
{where $(e_\la x)(s):=e^{\lambda s}x$ for $s\in [-1,0]$}.
Since $\|Le_\la\|_{\mathcal{L}(X,U)}\to 0$ and $\|Pe_\la\|_{\mathcal{L}(X)}\to 0$ as $\la\to+\infty$ (see \cite[Lemma 6.1]{Had}), it follows
\begin{align*}
\lim_{\la\to+\infty} \|L e_\la (\la R(\la,A^M)x)\|_U=0=\lim_{\la\to+\infty} \|P e_\la (\la R(\la,A^M)x)\|_X,\quad x\in X.
\end{align*}
Thus, \begin{align*}
\begin{pmatrix}x\\ \varphi\end{pmatrix}\in D(\mathcal{C}_\Lambda) \Longleftrightarrow x\in X\quad\text{and}\quad \varphi\in D(L_\Lambda)\cap D(P_\Lambda),
\end{align*}
where $L_\Lambda$ and $P_\Lambda$ are the Yosida extensions of $L$ and $P$ for $Q$, respectively. Therefore,
\begin{align}\label{mathscrCLambda}
 D(\mathcal{C}_\Lambda)=X\times D(L_\Lambda)\cap D(P_\Lambda)\quad\text{and}\quad \mathcal{C}_\Lambda=\begin{pmatrix} 0& L_\Lambda\\ 0& P_\Lambda\end{pmatrix}.
\end{align}
On the other hand,  both triples $(Q,\beta,L)$ and $(Q,\beta,P)$ are regular, cf.~\cite[Theorem 4]{HIR}, hence,  $\phi_t v\in D(L_\Lambda)\cap D(P_\Lambda)$ for any $v\in L^p_{loc}(\R^+,X)$ and a.e. $t>0$. By \eqref{mathscrB} and \eqref{mathscrCLambda}, we have $\Phi^{\mathcal{A},\mathcal{B}}_t u\in D(\mathcal{C}_\Lambda)$ for a.e. $t>0$ and all $u\in L^p_{loc}(\R^+,U\times X)$. {We can thus define}
\begin{align*}
\mathbb{F}_t^{\mathcal{A},\mathcal{B},\mathcal{C}} u&:=\mathcal{C}_\Lambda \Phi^{\mathcal{A},\mathcal{B}}_t u\cr &= \begin{pmatrix}\F_t^{L}\Phi^{A^M,B^I}_\cdot u\\ \F_t^{P}\Phi^{A^M,B^I}_\cdot u \end{pmatrix}
\end{align*}
for a.e. $t>0$ and $u\in L^p([0,t],U\times X)$, where $\F_t^L:=\F_t^{Q,\beta ,L}$ and $\F_t^P:=\F_t^{Q,\beta ,P}$ are the extended input-output operators associated with the regular triples $(Q,\beta,L)$ and $(Q,\beta,P)$, respectively.  Clearly, for any $\al>0$, we have
\begin{align*} \mathbb{F}_{\al}^{\mathcal{A},\mathcal{B},\mathcal{C}}\in \mathcal{L}(L^p([0,\al],U\times X)).\end{align*} This shows that the triple $(\mathcal{A},\mathcal{B},\mathcal{C})$ is well-posed. By {taking Laplace transform on both sides of} \eqref{mathscrB}, we have
\begin{align}\label{DD_la}\begin{split} 
\mathbb{D}_\la \begin{pmatrix} v_1\\ v_2\end{pmatrix} &:=R(\la,\mathcal{A}_{-1})\mathcal{B}\begin{pmatrix} v_1\\ v_2\end{pmatrix}\\
&=\begin{pmatrix} R(\la,A^M_{-1})Bv_1+R(\la,A^M_{-1})v_2\\ e_\la (R(\la,A^M_{-1})Bv_1+R(\la,A^M_{-1})v_2)\end{pmatrix}\in D(\mathcal{C}_\Lambda)
\end{split}
\end{align}
for any $v_1\in U$ and $v_2\in X$. Thus by Theorem \ref{regular},
the triple $(\mathcal{A},\mathcal{B},\mathcal{C})$ is regular.
Now observe, that
\begin{align*}
\mathbb{F}_{\al}^{\mathcal{A},\mathcal{B},\mathcal{C}}=\begin{pmatrix}\F_{\al}^L\Phi^{A^M,B}_\cdot & \F_{\al}^L\Phi^{A^M,I}_\cdot\\ \F_{\al}^P\Phi^{A^M,B}_\cdot & \F_{\al}^P\Phi^{A^M,I}_\cdot\end{pmatrix}.
\end{align*}

By \eqref{in-out}, and by applying H\"older's and Fubini's theorem, we have
\begin{eqnarray*}
\|F_\alpha ^Pu\|_{L^p([0,\alpha],X)}^p &=& \|P\phi_{\cdot} u\|_{L^p([0,\alpha],X)}^p \\
&=& \int_0^\alpha \left\|\int_{-1}^0 d\mu(\theta)(\phi_s u)(\theta)\right\|^pds\\
&\le & \int_0^\alpha |\mu|([-s ,0])^{\frac{p}{q}}\int_{-s}^0 d|\mu|(\theta)\|u(s+\theta)\|^p ds\\
&\le & |\mu|([-\alpha ,0])^{\frac{p}{q}}\int_{-\alpha}^0 d|\mu|(\theta)\left(\int_{-\theta}^\alpha\|u(s+\theta)\|^pds\right)\\
&\le & |\mu|([-\alpha ,0])^{\frac{p}{q}+1}\|u\|_{L^p([0,\alpha],X)}^p
\end{eqnarray*}
for any $\alpha\in (0,1)$ and any $u\in W^{1,p}_{0,\alpha}(X)$, where $\frac{1}{p}+\frac{1}{q}=1$, $1\le p,q \le \infty$.
Hence, by density, we obtain
$$\|F_\alpha ^Pu\|_{L^p([0,\alpha],X)}\le  |\mu|([-\alpha ,0])\|u\|_{L^p([0,\alpha],X)},\quad  \text{ for all } u\in L^p([0,\alpha],X).$$
By the same computation one gets
$$\|F_\alpha ^Lu\|_{L^p([0,\alpha],U)}\le |\nu|([-\alpha ,0])\|u\|_{L^p([0,\alpha],U)},\quad  \text{ for all } u\in L^p([0,\alpha],U).$$
Thus, there exists a constant $C>0$ such that for any $\al\in (0,1)$ and any $u_1\in L^p([0,\al],U)$, $u_2\in L^p([0,\al],X)$,
we have
\begin{align*}
&\left\| \mathbb{F}_{\al}^{\mathcal{A},\mathcal{B},\mathcal{C}}\left(\begin{smallmatrix} u_1\\ u_2\end{smallmatrix}\right)\right\|_{L^p([0,\al],U\times X)} \\
&\le C \left(|\mu|([-\al,0])+|\nu|([-\al,0])\right) \left\|\left(\begin{smallmatrix} u_1\\ u_2\end{smallmatrix}\right)\right\|_{L^p([0,\al],U\times X)}.
\end{align*}

Since $|\mu|([-\al,0])+|\nu|([-\al,0])\to 0$ as $\al\to 0$,  we can choose $\al>0$ such that
\begin{align*}
\left\| \mathbb{F}_{\al}^{\mathcal{A},\mathcal{B},\mathcal{C}}\right\|_{\calL(L^p([0,\al],U\times X))} <\frac{1}{2}.
\end{align*}
Thus, $I_{U\times X}-\mathbb{F}_{\al}^{\mathcal{A},\mathcal{B},\mathcal{C}}\colon L^p([0,\al],U\times X))\to L^p([0,\al],U\times X))$ admits an uniformly bounded inverse. 
By \cite[Theorem 2.7]{HMR}, the operator
\begin{equation*}
 \mathcal{A}^{cl}:=\mathcal{A}_{-1}+\mathcal{B}\mathcal{C}_\Lambda
\end{equation*}
with domain
\begin{equation*}
 D(\mathcal{A}^{cl}):=\left\{(\begin{smallmatrix} x\\ \varphi\end{smallmatrix})\in D(\mathcal{C}_\Lambda):(\mathcal{A}_{-1}+\mathcal{B}\mathcal{C}_\Lambda)(\begin{smallmatrix} x\\ \varphi\end{smallmatrix})\in \mathbb{X}\right\}
\end{equation*}
generates a strongly continuous semigroup on $\mathbb{X}$. For $\la>0$ be sufficiently large, $(\begin{smallmatrix} x\\ \varphi\end{smallmatrix})\in D(\mathcal{A}^{cl})$ if and only if
\begin{align}\label{ironmaidon}
\la \mathbb{D}_\la \mathcal{C}_\Lambda(\begin{smallmatrix} x\\ \varphi\end{smallmatrix})+\mathcal{A}_{-1}\left((\begin{smallmatrix} x\\ \varphi\end{smallmatrix})-\mathbb{D}_\la \mathcal{C}_\Lambda(\begin{smallmatrix} x\\ \varphi\end{smallmatrix}) \right)\in\mathbb{X}.
\end{align}
In particular, {putting $D_\la^M:=R(\la,A^M_{-1})B$ we obtain}
\begin{align}\label{in the domain}
(\begin{smallmatrix} x\\ \varphi\end{smallmatrix})-\mathbb{D}_\la \mathcal{C}_\Lambda(\begin{smallmatrix} x\\ \varphi\end{smallmatrix})=\begin{pmatrix}x- D^M_\la L_\Lambda \varphi-R(\la,{A^M})P_\Lambda \varphi\\ \varphi-e_\la [D^M_\la L_\Lambda \varphi+R(\la,{A^M})P_\Lambda \varphi]\end{pmatrix} \in D(\mathcal{A}).
\end{align}
This implies that $x- D^M_\la L_\Lambda \varphi-R(\la,A^M)P_\Lambda \varphi\in D(A^M),$ $x\in Z$, $\varphi\in W^{1,p}([-1,0],X)$, and $\varphi(0)=x$. From this and \cite[Lemma 3.6]{HMR}, we deduce that $L_\Lambda \varphi=L\varphi,$  $P_\Lambda \varphi=P\varphi$,  $x- D^M_\la L \varphi\in D(A^M)$, so that $Gx-GD^M_\la L \varphi=Mx-MD^M_\la L\varphi$.
On the other hand, from Theorem \ref{HMR15}, we know that $D^M_\la =D_\la+D_\la MD^M_\la$. Thus, $GD^M_\la L=L\varphi+MD^{M}_\la L\varphi$ and therefore $ (G-M)x=L\varphi.$ This means that $(\begin{smallmatrix} x\\ \varphi\end{smallmatrix})\in D(\calA_{P,L})$.

Let us now prove that the operators $\mathcal{A}^{cl}$ and $\calA_{P,L}$ coincide on
$D(\mathcal{A}^{cl})$. In fact, for $(\begin{smallmatrix} x\\ \varphi\end{smallmatrix})\in D(\calA^{cl})$ and by using \eqref{ironmaidon}, we have
\begin{align*}
& \mathcal{A}^{cl}(\begin{smallmatrix} x\\ \varphi\end{smallmatrix})=\la \mathbb{D}_\la \mathcal{C}_\Lambda(\begin{smallmatrix} x\\ \varphi\end{smallmatrix})+\mathcal{A}\left((\begin{smallmatrix} x\\ \varphi\end{smallmatrix})-\mathbb{D}_\la \mathcal{C}_\Lambda(\begin{smallmatrix} x\\ \varphi\end{smallmatrix}) \right)\cr &=
\begin{pmatrix}\la [D^M_\la L \varphi+R(\la,A^M)P \varphi]\\ \la e_\la [D^M_\la L \varphi+R(\la,A^M)P \varphi] \end{pmatrix}
+\begin{pmatrix}A_M(x-[D^M_\la L \varphi+R(\la,A^M)P \varphi])\\ \frac{d}{d\theta} (\varphi- e_\la [D^M_\la L \varphi+R(\la,A^M)P \varphi] ) \end{pmatrix}\cr &=
\begin{pmatrix}\la [D^M_\la L \varphi+R(\la,A^M)P \varphi]\\ \la e_\la [D^M_\la L \varphi+R(\la,A^M)P \varphi] \end{pmatrix}
+\begin{pmatrix}A_m x-A_m D^M_\la L \varphi-A^M R(\la,A^M)P \varphi\\ \frac{d\varphi}{d\theta} - \la e_\la [D^M_\la L \varphi+R(\la,A^M)P \varphi] \end{pmatrix}\cr &= \begin{pmatrix}A_m x-(\la-A_m) D^M_\la L \varphi-(\la-A^M )R(\la,A^M)P \varphi\\ \frac{d\varphi}{d\theta}  \end{pmatrix}\cr & = \begin{pmatrix}A_m x+P\varphi\\ \frac{d\varphi}{d\theta}\end{pmatrix}\cr & = \calA_{P,L}(\begin{smallmatrix} x\\ \varphi\end{smallmatrix}),
\end{align*}
due to $(\la-A_m) D^M_\la L \varphi=0$ (in fact, ${\rm range}(D^M_\la)={\rm range}(D_\la)\subset \ker(\la-A_m)$).
By the same computations as before, one can see easily that $D(\mathcal{A}_{P,L})\subseteq D(\calA^{cl})$.
This ends the proof.
\end{proof}

\section{Spectral theory and positivity}\label{sec:4}

After providing the conditions that ensure the existence of the semigroup associated with the boundary value problem with delay \eqref{BED}, we will now seek additional conditions to secure the positivity of this semigroup. This requires an explicit calculation of the resolvent operator of its generator. The following result shows some spectral properties for the generator $\calA_{P,L}$.

{For $\la\in \rho(A)$ we denote
\begin{align}\label{eq:delta}
\begin{split}
\Delta(\la)&:=e_\la [D_\la ^M L+ R(\la,A^M)P]\\
&= e_\la (I-D_\la M)^{-1} [D_\la L+ R(\la,A)P].
\end{split}
\end{align}
The last equality follows by  Theorem \ref{HMR15}, since $D_\la ^M=(I-D_\la M)^{-1}D_\la$ and $R(\la ,A^M)=(I-D_\la M)^{-1}R(\la ,A)$.}
\begin{theo}\label{Main-result-2}
Let Assumption \ref{tildeA} be satisfied. For $\la\in \rho(A)$ and $1\in \rho(D_\la M)$, we have
\begin{align*}
\la\in \rho(\calA_{P,L}) & \Longleftrightarrow 1\in {\rho\left(\Delta(\la)\right).}
\end{align*}
In this case,
\begin{align*}
R(\la,\calA_{P,L})=\begin{pmatrix} \mscr{R}(\la)_{11}&\mscr{R}(\la)_{12}\\ \mscr{R}(\la)_{21}&\mscr{R}(\la)_{22}\end{pmatrix}
\end{align*}
with
\begin{align*}
\mscr{R}(\la)_{11}&:= {(I-\Delta(\la))^{-1} R(\la ,A^M)}\\
\mscr{R}(\la)_{12}&:= {e_{-\la}\Delta(\la)(I-\Delta(\la))^{-1} R(\la ,Q)}\\
\mscr{R}(\la)_{21}&:=(I-\Delta(\la))^{-1}e_\la R(\la,A^M),\cr
\mscr{R}(\la)_{22}&:=(I-\Delta(\la))^{-1} R(\la,Q).
\end{align*}
\end{theo}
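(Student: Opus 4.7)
The plan is to solve the resolvent equation $(\lambda - \calA_{P,L})\binom{x}{\varphi} = \binom{y}{\psi}$ directly and read off the four block entries. Unpacked, the system is
\[
(\lambda - A_m)x - P\varphi = y, \qquad \lambda\varphi - \varphi' = \psi,
\]
subject to the domain constraints $\varphi(0) = x$ and $(G-M)x = L\varphi$.

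The first step is to solve the ODE in the second coordinate with prescribed trace $\varphi(0)=x$; by superposition this yields $\varphi = e_\lambda x + R(\lambda, Q)\psi$. For the first coordinate, Theorem~\ref{HMR15} together with the identities $D_\lambda^M = (I-D_\lambda M)^{-1}D_\lambda$ and $GD_\lambda = I$ give, by a short calculation, $(\lambda - A_m)D_\lambda^M = 0$ and $(G-M)D_\lambda^M = I$; thus $D_\lambda^M$ plays the role of a Dirichlet-type map for the perturbed boundary operator $G-M$. Splitting the inhomogeneous boundary problem accordingly, I obtain
\[
x = R(\lambda, A^M)(y + P\varphi) + D_\lambda^M L\varphi.
\]
Substituting the expression for $\varphi$ turns this into the fixed-point equation
\[
(I - \widetilde{\Delta}(\lambda))\,x = R(\lambda, A^M)y + K\, R(\lambda, Q)\psi,
\]
where $K := D_\lambda^M L + R(\lambda, A^M)P$ and $\widetilde{\Delta}(\lambda) := K e_\lambda$ is a bounded operator on $X$.

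The key algebraic observation is that $\widetilde{\Delta}(\lambda) = Ke_\lambda$ whereas $\Delta(\lambda) = e_\lambda K$, so the standard fact $\sigma(AB)\cup\{0\} = \sigma(BA)\cup\{0\}$ immediately yields the equivalence $1\in\rho(\widetilde{\Delta}(\lambda)) \Longleftrightarrow 1\in\rho(\Delta(\lambda))$, which is the claimed spectral characterization. The associated intertwining identities
\[
(I-\Delta(\lambda))^{-1}e_\lambda = e_\lambda (I-\widetilde{\Delta}(\lambda))^{-1}, \qquad K(I-\Delta(\lambda))^{-1} = (I-\widetilde{\Delta}(\lambda))^{-1} K,
\]
follow by direct Neumann-series expansion. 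Using them, inverting the fixed-point equation produces $x$, and substituting back into $\varphi = e_\lambda x + R(\lambda, Q)\psi$ and simplifying by means of the algebraic identity $(I-\Delta)^{-1}\Delta = (I-\Delta)^{-1} - I$ gives $\varphi = (I-\Delta(\lambda))^{-1} e_\lambda R(\lambda, A^M)y + (I-\Delta(\lambda))^{-1} R(\lambda, Q)\psi$. Matching with the formulas then reads off the four entries $\mscr{R}(\lambda)_{ij}$; the $e_{-\lambda}$ appearing in the $(1,2)$ block is to be understood as the formal left inverse of $e_\lambda$ on functions of the form $\theta\mapsto e^{\lambda\theta}v$, which is precisely where $\Delta(\lambda)(\cdot)$ takes values.

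The converse direction of the spectral statement (i.e.\ $\lambda\in\rho(\calA_{P,L})\Rightarrow 1\in\rho(\Delta(\lambda))$) I would settle by the contrapositive: if $(I - \widetilde{\Delta}(\lambda))x = 0$ with $x\neq 0$, then $(x, e_\lambda x)$ is readily checked to lie in $D(\calA_{P,L})$ and satisfy $\calA_{P,L}(x, e_\lambda x) = \lambda(x, e_\lambda x)$, contradicting $\lambda\in\rho(\calA_{P,L})$. The main care I anticipate is the bookkeeping of operator domains across the spaces $X$, $Z$, $U$, $L^p$, $W^{1,p}$ --- in particular verifying that $Pe_\lambda$, $Le_\lambda$ are bounded on $X$ (as in the proof of Theorem \ref{Main-result-1}), that the Yosida extensions agree with $P$ and $L$ on the $W^{1,p}$-elements that appear, and that the informal use of $e_{-\lambda}$ in the $(1,2)$ block is legitimate; the remaining manipulations amount to patient substitution.
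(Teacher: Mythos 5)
Your derivation is correct in substance and takes a genuinely different route from the paper. The paper identifies $\calA_{P,L}$ with the closed-loop generator $\calA^{cl}$ and uses the factorization $\la-\calA_{P,L}=(\la-\calA)(I-\mathbb{D}_\la\mathbb{P})$ on the domain, so that both the spectral equivalence and the resolvent reduce (via \cite[Theorem 4.1]{HMR}) to inverting the upper-triangular operator matrix $I-\mathbb{D}_\la\mathbb{P}$, whose only nontrivial diagonal entry is $I-\Delta(\la)$. You instead solve $(\la-\calA_{P,L})\left(\begin{smallmatrix}x\\ \varphi\end{smallmatrix}\right)=\left(\begin{smallmatrix}y\\ \psi\end{smallmatrix}\right)$ by hand: the relation $\varphi=e_\la x+R(\la,Q)\psi$, the observation that $D_\la^M$ is a Dirichlet map for $(A_m,G-M)$ (indeed $(\la-A_m)D_\la^M=0$ and $(G-M)D_\la^M=I$ follow from Theorem \ref{HMR15} and $GD_\la=I$), and the fixed-point equation $(I-Ke_\la)x=R(\la,A^M)y+KR(\la,Q)\psi$ with $K=D_\la^ML+R(\la,A^M)P$, combined with the intertwining of $Ke_\la$ with $e_\la K=\Delta(\la)$, reproduce all four blocks; your reading of the $(1,1)$ entry as $(I-Ke_\la)^{-1}R(\la,A^M)$ and of $e_{-\la}$ as the left inverse of $e_\la$ on its range is exactly what the paper's matrix product yields. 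Your route is more elementary and self-contained for the resolvent formula; the paper's buys the spectral equivalence wholesale from the feedback factorization.

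That is precisely where your argument has a genuine gap: the implication $\la\in\rho(\calA_{P,L})\Rightarrow 1\in\rho(\Delta(\la))$. Your contrapositive only excludes the case where $1$ is an \emph{eigenvalue} of $\widetilde\Delta(\la)=Ke_\la$, by exhibiting the eigenvector $\left(\begin{smallmatrix}x\\ e_\la x\end{smallmatrix}\right)$. On a general Banach space $X$ the operator $Ke_\la$ need not be compact, so $1$ may lie in the continuous or residual spectrum with $I-Ke_\la$ injective, and then no eigenvector exists. What your computation actually shows is that bijectivity of $\la-\calA_{P,L}$ forces $I-Ke_\la$ to be injective with range containing $\mathrm{range}(R(\la,A^M))+\mathrm{range}(KR(\la,Q))$, which is dense but a priori not all of $X$; moreover the resolvent bound controls $\|x\|$ only by $\|(\la-A^M)w\|$ when $w=R(\la,A^M)y$, i.e.\ by a graph norm, so bounded invertibility of $I-Ke_\la$ does not follow from density plus injectivity. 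To close this you need either the paper's factorization $\la-\calA_{P,L}=(\la-\calA)(I-\mathbb{D}_\la\mathbb{P})$, which transfers invertibility exactly in both directions, or a separate argument upgrading dense range to surjectivity. The forward implication, the block formulas, and the domain bookkeeping you flag at the end are all sound.
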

\begin{proof}
Let $\la\in \rho(A)$ and $1\in \rho(D_\la M)$. According to  Theorem \ref{HMR15}, we have $\la\in \rho(A^M),$  $R(\la,A^M)=(I-D_\la M)^{-1} R(\la,A)$ and $D^M_\la:=R(\la,A^M_{-1})B =(I-D_\la M)^{-1} D_\la$.

Since $\mathbb{D}_\la:=R(\la ,\mathcal{A}_{-1})\mathcal{B}$ and
$\mathcal{A}_{P,L}=\mathcal{A}^{cl}$, it follows from \eqref{in the domain} that, on the domain of $\mathcal{A}_{P,L}$, we have
\begin{eqnarray*}
\la -\mathcal{A}_{P,L} &=& \la -\mathcal{A}^{cl}\\
&=& \la -\mathcal{A}_{-1}-(\la -\mathcal{A}_{-1})\mathbb{D}_\la C_\Lambda \\
&=& (\la -\mathcal{A}_{-1})(I_{U\times X}-\mathbb{D}_\la \mathbb{P})\\
&=& (\la -\mathcal{A})(I_{U\times X}-\mathbb{D}_\la \mathbb{P}).
\end{eqnarray*}
This shows that
\begin{align*}
\la\in \rho(\calA_{P,L})\Longleftrightarrow \la\in \rho(\mathcal{A}^{cl})\Longleftrightarrow 1\in \rho(\mathbb{D}_\la \mathbb{P}).
\end{align*}
Using \eqref{DD_la} we now compute
\begin{align*}
\mathbb{D}_\la \mathbb{P}&=\begin{pmatrix} D^M_\la & R(\la,A^M)\\ e_\la D^M_\la & e_\la R(\la,A^M)\end{pmatrix}\begin{pmatrix} 0&L\\ 0&P\end{pmatrix}\cr & = \begin{pmatrix} 0& D^M_\la L+R(\la ,A^M)P\\ 0& e_\la D^M_\la L+e_\la R(\la,A^M)P\end{pmatrix}
{=  \begin{pmatrix} 0& e_{-\la}\Delta(\la)\\0 &  \Delta(\la)\end{pmatrix} }
\end{align*}
Again by \cite[Theorem 4.1]{HMR}, we have
\begin{align*}
R(\la,\calA_{P,L}) &=(I_{U\times X}-\mathbb{D}_\la \mathbb{P})^{-1} R(\la,\mathcal{A})\cr
& =\begin{pmatrix} I&  {e_{-\la}\Delta(\la)}
 (I-\Delta(\la))^{-1}\\ 0& (I-\Delta(\la))^{-1}\end{pmatrix}\begin{pmatrix} R(\la,A^M)& 0\\ e_\la R(\la,A^M)& R(\la,Q)\end{pmatrix}
\end{align*}
The result now follows by a simple product matrix computation.
\end{proof}

We now investigate the positivity of semigroup $(\mathcal{T}_{P,L}(t))_{t\ge 0}$ by applying the developed formula of the resolvent $R(\lambda,\mathcal{A}_{P,L})$ and verifying its positivity for $\lambda$ big enough.

\begin{theo}\label{Main-result-3}
Let Assumption \ref{tildeA} be satisfied. In addition, we assume that the operators $M,L,P,$ $R(\lambda,A)$ and $D_\lambda$ are positive for $\lambda>s(A)$, and there exists $\la_0>s(A)$ such that
\begin{align}\label{Condition-positivity} r(MD_{\la_0})<1.
\end{align}
Then the semigroup $(\mathcal{T}_{P,L}(t))_{t\ge 0}$ is positive.
\end{theo}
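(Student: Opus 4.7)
I would use the standard characterization that a $C_0$-semigroup on a Banach lattice is positive if and only if its resolvent $R(\lambda, \calA_{P,L})$ is a positive operator for all sufficiently large real $\lambda$ (see \cite[Chapter 11]{Positive2017}). The plan is then to verify, via the explicit block-matrix formula from Theorem \ref{Main-result-2}, that each entry $\mscr{R}(\lambda)_{ij}$ is a composition of positive operators once $\lambda$ is taken large enough.

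The first step would be to establish positivity of $(I - D_\lambda M)^{-1}$. Since $M \ge 0$ and $D_\lambda \ge 0$, the assumption $r(MD_{\lambda_0}) < 1$, combined with the spectral identity $r(MD_\lambda) = r(D_\lambda M)$, yields a convergent positive Neumann series $(I - D_\lambda M)^{-1} = \sum_{k \ge 0}(D_\lambda M)^k$ at $\lambda_0$; by the decay of $\|D_\lambda\|$ as $\lambda \to \infty$ (standard in this admissibility setting), this extends to all $\lambda$ in some half-line $[\lambda_1, \infty)$. The factorizations from Theorem \ref{HMR15},
\[
R(\lambda, A^M) = (I - D_\lambda M)^{-1} R(\lambda, A), \qquad D^M_\lambda = (I - D_\lambda M)^{-1} D_\lambda,
\]
then immediately give positivity of $R(\lambda, A^M)$ and $D^M_\lambda$ on this half-line.

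Next, I would check that $e_\lambda$ (multiplication by $e^{\lambda \theta}$) and the translation resolvent $R(\lambda, Q)$ on $Y = L^p([-1,0], X)$ are positive by inspection of their explicit kernels. Together with positivity of $L$ and $P$, this makes
\[
\Delta(\lambda) = e_\lambda \bigl[D^M_\lambda L + R(\lambda, A^M) P\bigr]
\]
a positive operator. The main analytic obstacle will be to show $r(\Delta(\lambda)) < 1$ for $\lambda$ large enough, so that $(I - \Delta(\lambda))^{-1}$ admits a positive Neumann expansion. I expect this to follow from the decay $\|L e_\lambda\|, \|P e_\lambda\| \to 0$ as $\lambda \to \infty$, which is a consequence of $\mu$ and $\nu$ being continuous at $0$ with $\mu(0) = \nu(0) = 0$, exactly as already exploited in the proof of Theorem \ref{Main-result-1}.

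Assembling these pieces, for $\lambda$ sufficiently large every entry $\mscr{R}(\lambda)_{ij}$ in the formula of Theorem \ref{Main-result-2} is a product of positive operators, so $R(\lambda, \calA_{P,L}) \ge 0$ on $\X = X \times Y$. By the resolvent characterization of positivity, this yields positivity of the semigroup $(\calT_{P,L}(t))_{t \ge 0}$, concluding the proof.
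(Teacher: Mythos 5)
Your overall strategy coincides with the paper's: reduce positivity of $(\mathcal{T}_{P,L}(t))_{t\ge0}$ to positivity of $R(\la,\calA_{P,L})$ for large real $\la$, read the block entries off Theorem \ref{Main-result-2}, obtain $R(\la,A^M)\ge0$ and $D_\la^M\ge0$ from the positive Neumann series for $(I-D_\la M)^{-1}$ (the paper delegates this to \cite[Theorem 3.2]{BBH-22}), and concentrate on showing $(I-\Delta(\la))^{-1}\ge0$. The step you should not wave at is precisely your announced ``main analytic obstacle'': $\Delta(\la)=e_\la[D_\la^M L+R(\la,A^M)P]$ is \emph{not} a bounded operator on $Y=L^p([-1,0],X)$, since $L$ and $P$ are only defined on $W^{1,p}([-1,0],X)$ (resp.\ on the Yosida domains), so ``$r(\Delta(\la))<1$'' and the Neumann series $\sum_n\Delta(\la)^n$ on $Y$ are not directly available. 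The paper resolves this by the flip identity $(I-AB)^{-1}=I+A(I-BA)^{-1}B$, i.e.
\begin{align*}
(I-\Delta(\la))^{-1}=I+e_\la\bigl(I-(D^M_\la L+R(\la,A^M)P)e_\la\bigr)^{-1}(D^M_\la L+R(\la,A^M)P),
\end{align*}
which moves the inversion to the genuinely bounded operator $(D^M_\la L+R(\la,A^M)P)e_\la$ on $X$. Relatedly, the decay $\|Le_\la\|,\|Pe_\la\|\to0$ alone does not give smallness of this operator: you also need $\|D_\la^M\|$ and $\|R(\la,A^M)\|$ uniformly bounded on a half-line, which the paper obtains from the domination $0\le D_\la^M\le D_{\la_0}^M$ (a consequence of the resolvent equation and the positivity hypotheses), comparing everything with the fixed operators at $\la_0$; it then factors $I-(D_\la^M L+R(\la,A^M)P)e_\la$ into two pieces, each inverted by a positive Neumann series. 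Once you insert the flip identity and the $\la_0$-domination, your more direct ``one Neumann series for the sum'' variant does go through and is in fact slightly shorter than the paper's two-factor decomposition; without these two ingredients, the argument as written has a gap exactly where you expected the difficulty to lie.
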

\begin{proof}
We aim to prove that the operator $R(\la,\mathcal{A}_{P,L})$ is positive on a right half-line. According to the expression of its resolvent given in Theorem \ref{Main-result-2} and since, by \eqref{Condition-positivity}, $R(\la ,A^M)$ and $D_\la ^M$ are positive for all $\la >\la _0$, see
\cite[Theorem 3.2]{BBH-22}, and $R(\la,Q)$ is positive as well, it suffices to show that $(I-\Delta(\la))^{-1}$ is positive. Indeed,
for $\la>\la_0$,  we have
\begin{align*}(I-\Delta(\la))^{-1}&=(I-e_{\la}(D^M_\la L+ R(\la,A^M)P))^{-1}\cr &= I+e_\la \left(I-(D^M_\la L+ R(\la,A^M)P)e_\la\right)^{-1}(D^M_\la L+ R(\la,A^M)P).\end{align*}
From this equality we deduce that $(I-\Delta(\la))^{-1}$ is positive, if
\begin{equation}\label{eq:pos}
 \left(I-(D^M_\la L+ R(\la,A^M)P)e_\la\right)^{-1}\ge 0.\end{equation}
Observe that, by using the positivity of $M,\,D_\la$, \eqref{Condition-positivity}, the resolvent equation and  Theorem \ref{HMR15}, it is not difficult to see that $(D^M_\la)_{\la >\la _0}$ is a nonincreasing sequence. This implies that
$$Le_\la D^M_\la \le Le_\la D^M_{\la_0},\quad \text{ for all }\la >\la_0.$$
Thus, since $\lim_{\la \to\infty}\|Le_\la \|_{\mathcal{L}(X,U)}=0$, it follows that there exists a sufficiently large $\mu_0>\la_0$ such that
$$\|Le_\la D^M_\la\|_{\mathcal{L}(U)}\le \|Le_\la D^M_{\la_0}\|_{\mathcal{L}(U)}\le \|Le_\la \|_{\mathcal{L}(X,U)} \|D^M_{\la_0}\|_{\mathcal{L}(U,X)}<1,\quad  \text{ for all } \la \ge \mu_0.$$
Hence, for any $\la \ge\mu_0$,
\begin{eqnarray*}
0\le (I-D^M_\la Le_\la)^{-1} &=& I+D_\la ^M(I-Le_\la D^M_\la)^{-1}Le_\la \\
&=& I+D^M_\la \sum_{n=0}^\infty (Le_\la D^M_\la)^nLe_\la \\
&\le & (I-D^M_{\la_0} Le_{\la_0})^{-1}.
\end{eqnarray*}
From the above inequality, one obtains
$$
R(\la ,A^M)Pe_\la (I-D^M_\la Le_\la)^{-1}\le R(\la ,A^M)Pe_{\la_0}(I-D^M_{\la_0} Le_{\la_0})^{-1},\quad \la \ge \mu_0.$$
Therefore, there exists a sufficiently large $\mu_1>\mu_0$ such that for any $\la\ge \mu_1$, $$ \|R(\la,A^M)Pe_\la (I-D^M_\la Le_\la)^{-1}\|\le \|R(\la ,A^M)Pe_{\la_0}(I-D^M_{\la_0} Le_{\la_0})^{-1}\|<\frac{1}{2}.$$
On the other hand, observe that
$$ I-(D^M_\la L+ R(\la,A^M)P)e_\la= \left(I-R(\la,A^M)Pe_\la (I-D^M_\la Le_\la)^{-1}\right)(I-D^M_\la Le_\la)$$
for $\la \ge \mu_0$.
Thus, to achieve our initial goal (see \eqref{eq:pos}), it suffices to see that
\begin{align*}
&\left(I-(D^M_\la L+ R(\la,A^M)P)e_\la\right)^{-1} \\
&= (I-D^M_\la Le_\la)^{-1}\sum_{n=0}^\infty \left[R(\la,A^M)Pe_\la (I-D^M_\la Le_\la)^{-1}\right]^n
\ge  0
\end{align*}
for any $\la \ge \max\{\mu_0,\mu_1\}$.
This ends the proof.
\end{proof}

\section{Application: Flow in network with delays}\label{Section-Network}

The purpose of this section is to employ the theoretical findings derived in Sections \ref{sec:3} and \ref{sec:4} to prove the well-posedness and {the positivity of the solution to} network system with delays, as expressed in equation \eqref{del-f}. To achieve this objective, we will transform the system \eqref{del-f} into an abstract delay system that conforms to the structure of equation \eqref{BED}. Specifically, we consider the state space
\begin{align*}
X:=L^p([0,1],\C^m)\quad\text{with the norm}\quad \|f\|_X^p:=\sum_{k=1}^m \int^1_0 |f_k(s)|^p\, ds,
\end{align*}
the boundary space $U=\CC^m$ with the standard norm, and the phase space
\begin{align*}
{Y}:=L^p([-1,0],X)\quad\text{with the norm}\quad \|g\|_{{Y}}^p:= \int^0_{-1}\|g(\si)\|^p_{{X}}\,d\si.
\end{align*}
In the context of this study, we will make the assumption that the \emph{delay operators} can be represented as
\begin{align*}
P_{k}g_k=\int^0_{-1} d\mu_k(\theta)g_k(\theta) \quad\text{and}\quad \ell_k g_k=\int^0_{-1} d\nu_k(\theta)g_k(\theta).
\end{align*}
We define the operators $P\colon W^{1,p}([-1,0],X)\to X$ and $\ell\colon W^{1,p}([-1,0],X)\to \C^m$ as
\[P:={\rm diag}(P_k)\quad \text{and }\quad \ell:={\rm diag}(\ell_k).\]
Then, the  boundary condition {presented in the third line of  \eqref{del-f}}{ (see also  \cite[Proposition 2.1]{BDR}),}
can be reformulated as
\begin{align*}
f(1)=\B f(0)+\B \ell(g).
\end{align*}

We now define the operators on spaces X and Y, respectively.
\begin{align*}
&D(A_m):=W^{1,p}([0,1],\C^m),\quad A_m:=\frac{d}{dx},\cr
&D(Q_m):=W^{1,p}([-1,0],X),\quad Q_m:=\frac{d}{d\si}.
\end{align*}
By introducing the product spaces $\X:=X\times Y$ and $\mathcal{Z}:= D(A_m)\times D(Q_m)$, 
system \eqref{del-f} can be expressed as an abstract Cauchy problem, 
\begin{align*}
\begin{cases}
\dot{\calU}(t)=\A\; \calU(t),& t\ge 0,\\
\calU(0)=(\begin{smallmatrix}f\\g\end{smallmatrix})\in X\times{Y},
\end{cases}
\end{align*}
where $(\A,D(\A))$ is a linear operator on $\X=X\times{Y}$ defined by
\begin{align*}
D(\A)&:=\Big\{(\begin{smallmatrix}f\\g\end{smallmatrix})\in \mathcal{Z}: f(1)=\B f(0)+\B \ell(g),\;f=g(0)\Big\},\\
\A&:=\begin{pmatrix} A_m& P\\ 0& Q_m\end{pmatrix}.
\end{align*}
\begin{theo}\label{thm-network}
{Operator  $\A$ generates a positive strongly continuous semigroup $(\T(t))_{t\ge 0}$ on $\X$. Furthermore, the network system with delays  \eqref{del-f} is well-posed and the solutions $u=(u_j)_{1\le j\le m}\in X$ can be obtained from the formula
\begin{align*} \T(t)\left(\begin{smallmatrix}f\\g\end{smallmatrix}\right)=\begin{pmatrix}u(\cdot,t)\\ u(\cdot,t+\cdot)\end{pmatrix},\end{align*}
where $f\in X$, $g\in Y$ are given initial functions.
}
 \end{theo}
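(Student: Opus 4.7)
The plan is to recognize that the network system \eqref{del-f} is an instance of the abstract boundary delay equation \eqref{BED} and then invoke Theorem \ref{Main-result-1} for generation and Theorem \ref{Main-result-3} for positivity of the resulting semigroup.

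For the identification I would take $Z := W^{1,p}([0,1],\mathbb{C}^m)$, $U := \mathbb{C}^m$, $A_m := d/dx$ on $Z$, $Gf := f(1)$, $Mf := \B f(0)$, $L\varphi := \B\ell(\varphi)$ with $\ell := \mathrm{diag}(\ell_k)$, and $P$ as already introduced. With these choices, the boundary conditions defining $D(\A)$ become exactly $(G-M)f = L\varphi$ and $f = g(0)$, so the operator $\A$ of the theorem coincides with the abstract operator $\calA_{P,L}$ from \eqref{Big-generator}.

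Next I would verify Assumption \ref{tildeA}. The restriction $A \subset A_m$ to $\ker G$ generates the nilpotent left-translation semigroup on $X$, whence $\rho(A) = \mathbb{C}$, and $G$ is obviously surjective onto $\mathbb{C}^m$. A direct computation gives the Dirichlet operator $(D_\la v)(x) = e^{\la(x-1)}v$, so that $\|D_\la\|_{\mathcal{L}(U,X)} \to 0$ as $\la \to +\infty$. Admissibility of the control operator $B = (\la - A_{-1})D_\la$ and regularity of the triple $(A,B,C)$ with $C = M|_{D(A)}$ are classical for the one-dimensional transport operator with point-trace observation and have been recorded in this exact setting in \cite{BDR}. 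Invertibility of $I_U - F_\tau^{A,B,C}$ for $\tau$ small follows from a contraction argument based on the decay of $\|D_\la\|$ and the vanishing of $\mu_k,\nu_k$ at zero, exactly as in Step 2 of the proof of Theorem \ref{Main-result-1}. Theorem \ref{Main-result-1} then produces the $C_0$-semigroup $(\T(t))_{t\ge 0}$ on $\X$ generated by $\A$, together with the solution representation \eqref{Sol}.

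For positivity I would apply Theorem \ref{Main-result-3}. The entries of $\B$ are nonnegative by \eqref{eqn:adjMat}, so $M$ and $L$ are positive; $P$ and $\ell$ are positive by the positivity of the operator-valued measures $\mu_k,\nu_k$; $R(\la,A)$ is positive because the left-translation semigroup is positive; and $D_\la$ is positive from its explicit form. Finally, the identity $MD_\la = e^{-\la}\B$ on $\mathbb{C}^m$ together with $r(\B) = 1$ (column stochasticity) gives $r(MD_{\la_0}) = e^{-\la_0} < 1$ for any $\la_0 > 0 > s(A)$, which is condition \eqref{Condition-positivity}. Theorem \ref{Main-result-3} then yields the positivity of $(\T(t))_{t\ge 0}$. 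The main difficulty in this program is the regularity check for the triple $(A,B,C)$ in the $L^p$ setting; once this is quoted from \cite{BDR}, the remainder reduces to matching notation and assembling elementary estimates.
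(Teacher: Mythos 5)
Your overall strategy is exactly the paper's: identify $\A$ with $\calA_{P,L}$ via $Gf=f(1)$, $Mf=\B f(0)$, $L=\B\ell$, verify Assumption \ref{tildeA} for the transport triple, and then invoke Theorem \ref{Main-result-1} for generation and Theorem \ref{Main-result-3} for positivity; the positivity argument (positivity of $M,L,P,R(\la,A),D_\la$ and $r(MD_{\la_0})=e^{-\la_0}r(\B)=e^{-\la_0}<1$) matches the paper verbatim.

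There is, however, one concrete misstep. You justify the invertibility of $I_U-\F_\tau^{A,B,C}$ by ``a contraction argument based on the decay of $\|D_\la\|$ and the vanishing of $\mu_k,\nu_k$ at zero, exactly as in Step 2 of the proof of Theorem \ref{Main-result-1}.'' That argument does not apply here: the operator $\F_\tau^{A,B,C}$ in Assumption \ref{tildeA} is the input--output map of the triple $(A,B,C)$ with $C=M|_{D(A)}=\B\,\delta_0$, which involves only the boundary matrix $\B$ and has nothing to do with the delay measures $\mu_k,\nu_k$. The smallness coming from $|\mu|([-\al,0])+|\nu|([-\al,0])\to 0$ is used in the proof of Theorem \ref{Main-result-1} for the \emph{product-space} operator $\mathbb{F}_\al^{\mathcal{A},\mathcal{B},\mathcal{C}}$ built from $P$ and $L$, and that step is already internal to Theorem \ref{Main-result-1}; it is not what Assumption \ref{tildeA} asks you to check. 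The correct (and much simpler) reason in this application is that material entering at $x=1$ takes unit time to reach the observation point $x=0$, so $(\F_t^{A,B,C}u)(\tau)=\B u(\tau-1)$ for $\tau\ge 1$ and $=0$ otherwise; hence $\F_{t_0}^{A,B,C}=0$ for $t_0<1$ and $I_{\C^m}-\F_{t_0}=I_{\C^m}$ is trivially invertible. Relatedly, where you outsource the admissibility of $B$ and the regularity of $(A,B,C)$ to \cite{BDR}, the paper instead carries out the short explicit computations (Laplace-transform identification of $\Phi_t$ with $D_\la\hat u(\la)$ for $B$-admissibility, a one-line $L^p$ estimate for $C$-admissibility, and the limit $\gamma\to\infty$ of $M(\gamma R(\gamma,A)D_\la a)$ giving $C_\Lambda D_\la a=e^{-\la}\B a$ for regularity via Theorem \ref{regular}); including these would make the argument self-contained and would also hand you the explicit formula for $\F_t^{A,B,C}$ needed to fix the invertibility step.
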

\begin{proof}
Let us define $L$ as the operator $\B \ell$, and consider the operators $G$ and $M$, defined as
\begin{eqnarray*}
& & G,\,M\colon W^{1,p}([0,1],\C^m)\to \C^m;\\
& & Gf=f(1),\,Mf=\B f(0).
\end{eqnarray*}
Using this notation, we can observe that the operator $\A$ is similar to the operator $\calA_{P,L}$ defined in Section \ref{sec:3}. {By Theorem \ref{Main-result-1}, it suffices} to verify Assumptions \ref{tildeA}. It is evident that $G: W^{1,p}([0,1],\C^m)\to \C^m$ is surjective, and $A=A_m$ with domain
\[D(A)=\{f\in W^{1,p}([0,1],\C^m):f(1)=0\}\]
 generates the $C_0$-semigroup $(T(t))_{t\ge 0}$ on $X$ given by
\[(T(t)f)(x)=
\begin{cases}
			f(x+t), & \text{if $x+t\le 1$,}\\
            0, & \text{otherwise.}
\end{cases}
\]
In addition, the Dirichlet operator $D_\lambda \colon \C^m\to \ker(\lambda -A_m)$ can be computed explicitly and is given by
$$(D_\lambda a)(x)=e^{\lambda(x-1)}a,\quad a\in \C^m,\,x\in [0,1],\,\lambda \in \C.$$
We choose {$B:=(\la-A_{-1})D_\la$} for $\la\in \rho(A)$. Additionally, we define
\begin{eqnarray*}
(\Phi_t u)(x)=
\begin{cases}
u(x+t-1), & \text{if $x+t\ge 1,$}\\
            0, & \text{otherwise.}
\end{cases}
\end{eqnarray*}
Thus, for any $t\ge 0$, $\Phi_t$ is a linear and bounded operator from $L^p(\R^+,\C^m)$ to $X$. Furthermore, it can be observed that the Laplace transform of $(t\mapsto \Phi_t u)$ is precisely $D_\la \hat{u}(\la)$, {where $\hat{u}$ denotes the Laplace transform of $u$}. Therefore, by the injectivity of the Laplace transform, {we have}
\begin{align*} \Phi_t u=\int^t_0 T_{-1}(t-s)Bu(s)ds\in X,\quad t\ge 0.\end{align*}
This implies that $B$ is an admissible control operator for $A$. Let us now consider the observation operator denoted as $C:=M_{|D(A)}$. For any $\alpha\in (0,1]$ and $f\in D(A)$, we can establish the following estimate:
\begin{eqnarray*}
\int_0^t |CT(s)f|_{\mathbb{C}^m}^pds &=& \int_0^t |\B f(s)|_{\mathbb{C}^m}^p\\
&\le & \|\B\|^p\|f\|_p^p.
\end{eqnarray*}
From this, we can conclude that $C$ is an admissible observation operator for $A$.  On the other hand,
{by the resolvent identity and equation $D_\la a = R(\la,A)Ba$, we obtain
\begin{align*}
(\gamma R(\gamma ,A)D_\lambda a)(x) &= \frac{\gamma}{\gamma -\lambda}\left(R(\la, A) - R(\gamma, A)\right) Ba \\
&=\frac{\gamma}{\gamma -\lambda}\left(e^{\la(x-1)}-e^{\gamma (x-1)}\right)a
\end{align*}
for any $\gamma >0$, $x\in [0,1]$, $a\in \C^m$, $\lambda \in \C.$} So,
$$M(\gamma R(\gamma ,A)D_\lambda a)=\frac{\gamma}{\gamma -\lambda}\left(e^{-\la}-e^{-\gamma}\right)\B a,\quad \gamma >0,\,a\in \C^m.$$
Taking the limit $\gamma \to +\infty$ we obtain {for the Yosida extension, see \eqref{eq:yosida},}
$${\rm Range}(D_\lambda)\subset D(C_\Lambda) \hbox{\ and }C_\Lambda D_\lambda a=e^{-\lambda}\B a,\quad \lambda \in \C ,\,a\in \C^m.$$
Thus, by Theorem \ref{regular}, it follows that
the triple $(A,B,C)$ generates a regular linear system on $X,\,\C^m$.
Finally, we note that
$$(\F_t^{A,B,C} u)(\tau)=
\begin{cases}
\B u(\tau -1), & \text{if $\tau \ge 1,$}\\
            0, & \text{otherwise}
\end{cases}
$$ on $[0,t]$. So, for $t_0<1$, operator {$I_{\C^m} -\F_{t_0}=I_{\C^m}$ is invertible.}

We have verified all the conditions in Assumption \ref{tildeA}.
{Moreover, since $MD_\la =e^{-\lambda}\mathbb{B}$ and $\B$ is by \eqref{eq:w} stochastic, we have $r(MD_\la) <1$ for all $\lambda >0$.
All the statements now follow from Theorem \ref{Main-result-1} and Theorem \ref{Main-result-3}.}
\end{proof}

\section*{Acknowledgement}
This research was initiated at the joint stay of the authors at the Mathematisches Forschungsinstitut Oberwolfach in 2015 supported through the program ”Research in Pairs”.
This article is based upon work from COST Action CA18232, supported by COST (European Cooperation in Science and Technology), www.cost.eu.
The second author acknowledges financial support from the Slovenian Research Agency (ARIS), Grant
No. P1-0222.
The third author is member of the Gruppo Nazionale per l'Analisi Matematica, la Probabilità e le loro Applicazioni (GNAMPA) of the Istituto Nazionale di Alta Matematica (INdAM). He is supported by the M.U.R. Research Project Prin 2022: D53D23005580006 ”Elliptic and
parabolic problems, heat kernel estimates and spectral theory”. 

The authors thank Said Hadd (Agadir) for many fruitful discussions and for his help in many parts of this research. 

This work does not have any conflicts of interest.

\end{document}